\def\ps@pprintTitle{%
 \let\@oddhead\@empty
 \let\@evenhead\@empty
 \def\@oddfoot{\centerline{\thepage}}%
 \let\@evenfoot\@oddfoot}
\newtheorem{theorem}{Theorem}
\newtheorem*{thma}{Theorem A}
\newtheorem*{thmb}{Theorem B}
\newtheorem{definition}{Definition}
\newtheorem{lemma}{Lemma}
\newtheorem{proposition}{Proposition}
\newtheorem{corollary}{Corollary}
\newtheorem*{remark}{Remark}
\newcommand*\xbar[1]{%
  \hbox{%
    \vbox{%
      \hrule height 0.5pt 
      \kern0.4ex
      \hbox{%
        \kern-0.15em
        \ensuremath{#1}%
        \kern-0.15em
      }%
    }%
  }%
}
\begin{document}
\begin{frontmatter}

\title{Large deviations for local mass of branching Brownian motion}

\author{Mehmet \"{O}z}
\ead{mehmet.oz@ozyegin.edu.tr}
\ead[url]{https://faculty.ozyegin.edu.tr/mehmetoz/}

\address{Department of Natural and Mathematical Sciences, Faculty of Engineering, \"{O}zye\u{g}in University, Istanbul, Turkey}

\begin{abstract}
We study the local mass of a dyadic branching Brownian motion $Z$ evolving in $\mathbb{R}^d$. By `local mass,' we refer to the number of particles of $Z$ that fall inside a ball with fixed radius and time-dependent center, lying in the `subcritical' zone. Using the strong law of large numbers for the local mass of branching Brownian motion and elementary geometric arguments, we find large deviation results giving the asymptotic behavior of the probability that the local mass is atypically small on an exponential scale. As corollaries, we obtain an asymptotic result for the probability of absence of $Z$ in a ball with fixed radius and time-dependent center, and lower tail asymptotics for the local mass in a fixed ball. The proofs are based on a bootstrap argument, which we use to find the lower tail asymptotics for the mass outside a ball with time-dependent radius and fixed center, as well.
\end{abstract}

\vspace{3mm}

\begin{keyword}
Branching Brownian motion \sep Large deviations \sep Local mass
\vspace{3mm}
\MSC[2010] 60J80 \sep 60F10 \sep 92D25
\end{keyword}

\end{frontmatter}

\pagestyle{myheadings}
\markright{Local mass of BBM\hfill}

\section{Introduction}\label{intro}

The setting in this paper is a branching Brownian motion (BBM) evolving in $\mathbb{R}^d$. A classical problem in this setting is how the local mass of BBM grows in time asymptotically as time tends to infinity. In this work, by `local mass,' we refer to the number of particles that fall inside a ball of fixed size with a possibly time-dependent center. By an elementary calculation based on a first moment formula, one can find the expected local mass at a given time. The strong law of large numbers for local mass of BBM in $\mathbb{R}^d$ was proved by Watanabe \cite{W1967}, and later improved by Biggins \cite{B1992}, saying that almost surely the local mass at time $t$ behaves as its expectation as $t$ tends to infinity. Hence, we know how the local mass typically grows for large time. In this paper, we mainly study the large deviations for local mass in the downward direction, and obtain large-time asymptotic results on the probability that the local mass is atypically small on an exponential scale (Theorem~\ref{theorem1}). We then study the lower tail asymptotics for the mass that falls outside a ball with a fixed center and time-dependent radius (Theorem~\ref{theorem2}). The proofs are based on a bootstrap argument, which is given in two stages, where the first stage is completed by Lemma~\ref{lemma3}. We now present the formulation of the problem and a brief review of history on the topic, followed by our motivation to study the current problem.   

\subsection{Formulation of the problem}
Let $Z=(Z(t))_{t\geq 0}$ be a $d$-dimensional strictly dyadic BBM with constant branching rate $\beta>0$. Here, $t$ represents time, and strictly dyadic means that every time a particle branches, it gives exactly two offspring. The process starts with a single particle, which performs a Brownian motion in $\mathbb{R}^d$ for a random exponential time of parameter $\beta>0$. Then, the particle dies and simultaneously gives birth to precisely two offspring. Similarly, starting from the position where their parent dies, each offspring particle repeats the same procedure as their parent independently of others and of the parent, and the process evolves through time in this way. The Brownian motions and exponential lifetimes of particles are all independent from one another. For each $t\geq 0$, $Z(t)$ can be viewed as a discrete measure on $\mathbb{R}^d$. Let $P_x$ and $E_x$, respectively, denote the probability and corresponding expectation for $Z$ when the process starts with a single particle at position $x\in\mathbb{R}^d$, that is, when $Z(0)=\delta_x$, denoting the Dirac measure at $x$. When $Z(0)=\delta_0$, we simply use $P$ and $E$. For a Borel set $B\subseteq \mathbb{R}^d$ and $t\geq 0$, we write $Z_t(B)$ to denote the number of particles, i.e., the \textit{mass}, of $Z$ that fall inside $B$ at time $t$, and we write $N_t:=Z_t(\mathbb{R}^d)$ for the total mass at time $t$. 

For $x\in\mathbb{R}^d$, we use $|x|$ to denote its Euclidean norm, and $B(x,r)$ to denote the open ball of radius $r>0$ centered at $x$. Also, for a Borel set $B$ and $x\in\mathbb{R}^d$, we define their sum in the sense of sum of sets as $B+x:=\{y+x:y\in B\}$. Now let $0\leq\theta<1$, $B=B(y,r)$ be any fixed ball, and $\textbf{e}$ be the unit vector in $\mathbb{R}^d$ in the direction of $y$. (If $y$ is the origin, we may take $\textbf{e}$ to be any unit vector in $\mathbb{R}^d$.) For $t\geq 0$, let 
$$B_t=B+\theta\sqrt{2\beta}t\textbf{e}.$$
Observe that $(B_t)_{t\geq 0}$ represents a ball of fixed size and time-dependent center that is moving away from the origin radially at a linear speed, but not moving faster than the BBM. Indeed, by the classical result of McKean \cite{MK1975}, it is well-known that the `speed' of strictly dyadic BBM in one dimension is equal to $\sqrt{2\beta}$, which was later generalized to higher dimensions by Engl\"ander and den Hollander \cite{E2003}. More precisely, we have:
\begin{thma}[Speed of BBM; \cite{MK1975,E2003}] 
Let $Z$ be a strictly dyadic BBM in $\mathbb{R}^d$. For $t\geq 0$ define $M_t:=inf\{r>0:\text{supp}(Z(t))\subseteq B(0,r)\}$ to be the radius of the minimal ball that contains the support of BBM at time $t$. Then, in any dimension, 
$$M_t/t \rightarrow \sqrt{2\beta} \quad \text{in probability} \quad \text{as} \quad t\rightarrow\infty.$$
\end{thma}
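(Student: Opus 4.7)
The plan is to prove Theorem A in two steps, establishing separately the upper bound $\limsup_{t\to\infty} M_t/t \le \sqrt{2\beta}$ and the matching lower bound $\liminf_{t\to\infty} M_t/t \ge \sqrt{2\beta}$, both in probability.

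For the upper bound, I would invoke the standard many-to-one (first moment) formula for strictly dyadic BBM, which reads
$$E[Z_t(A)] = e^{\beta t}\, P_0(W_t \in A)$$
for any Borel $A \subseteq \mathbb{R}^d$, where $(W_t)_{t\ge 0}$ is a standard $d$-dimensional Brownian motion started at the origin. Fixing $c > \sqrt{2\beta}$ and invoking a Gaussian tail bound on $P_0(|W_t|>ct)$ gives
$$E\bigl[Z_t\bigl(\mathbb{R}^d \setminus B(0,ct)\bigr)\bigr] \le C\, t^{d/2 - 1}\, e^{(\beta - c^2/2)\, t} \longrightarrow 0,$$
since $c^2/2 > \beta$. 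Because $\{M_t > ct\}$ coincides (up to a null event) with $\{Z_t(\mathbb{R}^d \setminus B(0,ct)) \ge 1\}$, Markov's inequality then yields $P(M_t > ct) \to 0$.

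For the lower bound, I would reduce the problem to one dimension. Fix any unit vector $\mathbf{e} \in \mathbb{R}^d$ and set
$$L_t^{\mathbf{e}} := \max\{\mathbf{e}\cdot x : x \in \text{supp}(Z(t))\}.$$
Since the projection of a $d$-dimensional Brownian motion onto $\mathbf{e}$ is a one-dimensional Brownian motion, and the branching structure is preserved under this linear projection, the collection of projected positions evolves as a strictly dyadic one-dimensional BBM with the same branching rate $\beta$. The classical one-dimensional rightmost-particle result (McKean) then gives $L_t^{\mathbf{e}}/t \to \sqrt{2\beta}$ almost surely, and since $|x| \ge \mathbf{e}\cdot x$ for every $x \in \mathbb{R}^d$ we have $M_t \ge L_t^{\mathbf{e}}$. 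This gives $\liminf_{t\to\infty} M_t/t \ge \sqrt{2\beta}$ almost surely, which is stronger than what is needed.

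The genuine difficulty of the theorem is thus displaced onto the one-dimensional rightmost-particle theorem itself. If one prefers a self-contained argument, an alternative is to apply a Paley--Zygmund bound directly in $\mathbb{R}^d$ to $Z_t(A_t)$ with $A_t$ the thin spherical shell $\{x : ct \le |x| \le ct+1\}$ for $c < \sqrt{2\beta}$: the first moment grows exponentially, and the second moment, computed via the standard decomposition at the first branching time (where the process splits into two independent BBMs), can be controlled so that $\liminf_t P(Z_t(A_t) \ge 1) > 0$. A bootstrap using the roughly $e^{\beta s}$ independent subtrees present at an intermediate time $s$, each having a uniformly positive chance of depositing a descendant beyond $ct$, then boosts this positive lower bound to probability one.
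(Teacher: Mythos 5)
Your argument is correct, but note that the paper does not prove Theorem A at all: it is quoted as a known result, with the one-dimensional speed attributed to McKean \cite{MK1975} and the $d$-dimensional statement to Engl\"ander--den Hollander \cite{E2003}, so there is no in-paper proof to compare against. Your route is the standard one for deducing the $d$-dimensional claim: the upper bound via the many-to-one formula $E[Z_t(A)]=e^{\beta t}P_0(W_t\in A)$, a Gaussian tail estimate and Markov's inequality (any bound of the form $e^{-c^2t/2+o(t)}$ suffices, so the polynomial prefactor is immaterial), and the lower bound by projecting onto a fixed unit vector $\mathbf{e}$, observing that the projected system is a one-dimensional strictly dyadic BBM with the same rate $\beta$, and invoking the classical rightmost-particle speed together with $M_t\geq L_t^{\mathbf{e}}$. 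Two small remarks: since the theorem only asserts convergence in probability, you do not need the almost sure form of the one-dimensional result (which is not literally what McKean proves; the a.s.\ statement is due to later work, e.g.\ Bramson or Biggins-type arguments), the in-probability version suffices and follows directly from the KPP analysis; and your delegating the core difficulty to the one-dimensional theorem is perfectly consistent with how the paper itself treats the matter, since it cites that result anyway. The alternative Paley--Zygmund/shell argument you sketch would need the second-moment computation and the bootstrapping over subtrees spelled out to count as a proof, but it is not needed given the projection argument.
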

We remark that $M_t$ quantifies the spatial spread of BBM so that $M_t/t$ is a measure of the speed of BBM. More sophisticated results on the speed of BBM, such as almost sure results and higher order sublinear corrections, exist in the literature (see for example \cite{B1978,K2005}). For our purposes, Theorem A suffices; it says that typically for large $t$ and any $\varepsilon>0$, at time $t$ there will be particles inside $B(0,(\sqrt{2\beta}-\varepsilon)t)$ but no particles outside $B(0,(\sqrt{2\beta}+\varepsilon)t)$. Therefore, when we study the asymptotics of the mass in moving balls, to obtain meaningful results, we require that the ball moves in the `subcritical zone'(defined below). This explains why we consider $(B_t)_{t\geq 0}$, where the center of the ball $B_t$ is at a distance of $\theta\sqrt{2\beta}t+o(t)$ from the origin with the requirement that $0\leq\theta<1$.  
\begin{definition}[Subcritical zone]
For a family of Borel sets $(B_t)_{t\geq 0}$ in $\mathbb{R}^d$, we say that $(B_t)_{t\geq 0}$ moves in the \textit{subcritical zone} for $Z$ if there exists $0<\varepsilon<1$ and time $t_0$ such that 
$$B_t\subseteq B\left(0,\sqrt{2\beta}(1-\varepsilon)t\right) \quad \text{for all} \quad t\geq t_0.$$
\end{definition} 

The strong law of large numbers for local mass of BBM arises as a special case of \cite[Corollary, p.\ 222]{W1967}, where Watanabe established an almost sure result on the asymptotic behavior of certain branching Markov processes. The relevant special case of this result is as follows.
\begin{thmb}[SLLN for local mass of BBM; \cite{W1967}] 
Let $Z$ be a strictly dyadic BBM in $\mathbb{R}^d$. Then, for any fixed Borel set $B\subseteq \mathbb{R}^d$, 
$$\frac{Z_t(B)}{e^{\beta t}t^{-d/2}} \rightarrow (2\pi)^{-d/2}|B|\times W  \quad \text{as} \quad t\rightarrow\infty,$$
where $|B|$ is the Lebesgue measure of $B$, and $W$ is a $P$-a.s.\ strictly positive random variable.
\end{thmb}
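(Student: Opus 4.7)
I would prove Theorem~B by a first-and-second-moment method, upgrading $L^2$ convergence along a time subsequence to almost-sure convergence via Borel--Cantelli and a sandwich argument, and identifying the limit as the additive martingale limit $W:=\lim_t e^{-\beta t}N_t$.

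\textbf{Moments and $L^2$ limit.} By the many-to-one lemma,
$$E[Z_t(B)]=e^{\beta t}\int_B (2\pi t)^{-d/2}e^{-|x|^2/(2t)}\,dx,$$
and for fixed bounded $B$ the Gaussian tends to $1$ uniformly on $B$, so $E[Z_t(B)]\sim (2\pi)^{-d/2}|B|\,e^{\beta t}t^{-d/2}$, fixing the scale $a_t:=e^{\beta t}t^{-d/2}$. Conditioning on the first branch time and using the branching property produces the standard second-moment decomposition, whose dominant piece is of the form $c\,e^{2\beta t}\int_0^t e^{-c'\beta\tau}\,E\bigl[q_{t-\tau}(Y_\tau,B)^2\bigr]\,d\tau$ with $Y$ a standard Brownian motion and $q_u(x,B)$ the Brownian transition probability. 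Since $q_u(x,B)\lesssim (2\pi u)^{-d/2}|B|$ uniformly in $x$, careful bookkeeping gives
$$E\!\left[\bigl(Z_t(B)/a_t-(2\pi)^{-d/2}|B|\,W_t\bigr)^{2}\right]\to 0,$$
where $W_t:=e^{-\beta t}N_t$. Combined with the classical $L^2$ convergence $W_t\to W$ (the same formula taken with $B=\mathbb{R}^d$ shows $\sup_t E[W_t^2]<\infty$), this yields $Z_t(B)/a_t\to (2\pi)^{-d/2}|B|\,W$ in $L^2$.

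\textbf{Almost-sure upgrade.} Chebyshev and Borel--Cantelli along $t_n=n$ (or $t_n=n^k$ in low dimension, to make the $L^2$ error summable) give almost-sure convergence along $(t_n)$. For $t\in[t_n,t_{n+1}]$ I would sandwich
$$Z_{t_n}(B^{-})-R_n\le Z_t(B)\le Z_{t_{n+1}}(B^{+})+R_n,$$
where $B^\pm$ are $B$ slightly shrunk/enlarged and $R_n$ counts particles whose Brownian displacement over a single unit of time exceeds the modification size. A Brownian maximal inequality applied particle by particle, together with the $L^2$ bound on $N_{t_n}$, forces $R_n/a_{t_n}\to 0$ almost surely; then $B^\pm\to B$ and continuity of Lebesgue measure close the gap. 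Strict positivity of $W$ is automatic: strict dyadicity implies $N_t\ge 1$ deterministically, and the branching property reduces $P(W=0)$ to a fixed point of $s\mapsto s^2$ on $[0,1]$, so $P(W=0)\in\{0,1\}$; the value $0$ is forced by $E[W]=1$.

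\textbf{Main obstacle.} The delicate step is the $L^2$ estimate for $Z_t(B)/a_t$. After the $a_t$-scaling, one must show that pairs of particles with an ancient common ancestor generate $E[W^2]$, while pairs with a recent common ancestor contribute a vanishing amount despite the $(t-s)^{-d/2}$ Gaussian singularity at $s=t$. Getting this balance right is precisely what singles out the normalization $a_t=e^{\beta t}t^{-d/2}$ and forces the limit to factor as a deterministic spatial factor $(2\pi)^{-d/2}|B|$ times the global mass limit $W$, with no residual spatial randomness.
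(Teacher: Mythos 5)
You should first be aware that the paper contains no proof of Theorem~B: it is quoted from Watanabe \cite{W1967} (with the moving-set refinement from Biggins \cite{B1992}), and those proofs run through Fourier/martingale arguments (uniform convergence of additive martingales) rather than your second-moment plus Borel--Cantelli scheme. Your scheme is nonetheless a legitimate classical route, and several pieces are sound: the first-moment computation fixing $a_t=e^{\beta t}t^{-d/2}$, the identification of the limit as $(2\pi)^{-d/2}|B|\,W$ with $W=\lim_t e^{-\beta t}N_t$, and the strict positivity of $W$ via $L^2$-boundedness of $W_t$ and the fixed-point argument. The $L^2$ step you call bookkeeping is also essentially right, though it is most naturally proved by conditioning at an intermediate time $s=s(t)$ with $s\to\infty$, $s=o(\sqrt t)$, so that the heat-kernel replacement $t^{d/2}q_{t-s}(y,B)\to(2\pi)^{-d/2}|B|$ is uniform over the positions occupied at time $s$; comparing directly with $W_t$ hides exactly this point.

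The genuine gap is in the almost-sure upgrade. First, the normalization $a_t$ changes by a factor $e^{\beta(t_{n+1}-t_n)}$ over one mesh interval, so with $t_n=n$ your sandwich can only give $e^{-\beta}c_{B^{-}}W\le\liminf_t Z_t(B)/a_t\le\limsup_t Z_t(B)/a_t\le e^{\beta}c_{B^{+}}W$, not a limit; and the suggestion $t_n=n^k$ makes this strictly worse, since the gaps grow and the factor $a_{t_{n+1}}/a_{t_n}$ diverges. One needs a mesh of spacing $\delta$, letting $\delta\downarrow 0$ only after $n\to\infty$ (summability along such a mesh is not the obstacle: the intermediate-time conditioning makes the Chebyshev errors stretched-exponentially small in $t$). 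Second, as you define it, $R_n$ counts \emph{every} particle whose displacement over a mesh interval exceeds the enlargement size; its expectation is of order $p\,e^{\beta t_n}$ with $p=p(\eta,\delta)>0$ a constant not depending on $n$, so $R_n/a_{t_n}\to\infty$ rather than $0$, and the claim that a maximal inequality ``forces $R_n/a_{t_n}\to0$ a.s.'' fails as stated. The error count must be localized to particles lying near $B$ (expected number of order $p(\eta,\delta)\,a_{t_n}$), and even then one only obtains $\limsup_n R_n/a_{t_n}\le\varepsilon(\delta)$ almost surely with $\varepsilon(\delta)\to0$ as $\delta\to0$, which suffices only after the final $\delta\to0$ limit. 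Finally, a minor point: the step $B^{\pm}\to B$ uses $|\partial B|=0$, which is fine for the balls relevant to this paper but not for the arbitrary Borel $B$ in the statement; the general case needs an additional inner/outer regularity argument.
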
 
Theorem B can be viewed as a SLLN for local mass, because it says that with probability one, the mass in $B$ grows as its expectation as $t\rightarrow\infty$. Note that the expected mass in $B$ at time $t$ can be calculated using the first moment formula 
$$E[Z_t(B)]=E[Z_t(\mathbb{R}^d)]\times p_t(0,B)=e^{\beta t}\times \frac{1}{(2\pi t)^{d/2}}\int_{B}e^{-|x|^2/(2t)}\text{d}x,$$ 
where $p_t(0,B)$ denotes the probability that a Brownian particle starting at the origin falls inside $B$ at time $t$. Recall that $B_t=B+\theta\sqrt{2\beta}t\textbf{e}$, where $B=B(y,r)$ is a fixed ball, $\textbf{e}$ is the unit vector in the direction of $y$ (if $y$ is the origin, take $\textbf{e}$ to be any unit vector), and $0\leq\theta<1$. An extension of Theorem B for the local mass in moving Borel sets was given in \cite[Corollary 4]{B1992}, and implies that 
\begin{equation}\underset{t\rightarrow\infty}{\lim}\frac{1}{t}\log Z_t(B_t)=\beta(1-\theta^2) \quad \text{a.s.} \label{biggins}
\end{equation}
This means, the mass that falls inside $B_t$ is typically $\exp[\beta(1-\theta^2)t+o(t)]$. In this work, we mainly study the large deviations for local mass in a linearly moving ball $(B_t)_{t\geq 0}$ in the downward direction. More precisely, we are interested in the asymptotic behavior of 
$$P\left(Z_t(B_t)< e^{\beta at}\right) \quad \text{for} \quad 0\leq a<1-\theta^2,$$
where $a$ is an aytpically small exponent due to \eqref{biggins}. We then consider the mass that falls outside a linearly expanding ball $(\widehat{B}_t)_{t\geq 0}$ with $\widehat{B}_t:=B(0,x_t)$ and $x_t:=\theta\sqrt{2\beta}t$, and study the asymptotic behavior of 
$$P\left(Z_t(\widehat{B}_t^c)< e^{\beta at}\right) \quad \text{for} \quad 0\leq a<1-\theta^2,$$
where $\widehat{B}^c_t$ denotes the complement of $\widehat{B}_t$ in $\mathbb{R}^d$, and $a$ is again an aytpically small exponent for the mass in $\widehat{B}_t^c$ at time $t$.

\subsection{History} 

In the past fifty years, a variety of results have been obtained concerning the asymptotic behavior of the mass of BBM. It is well known that the total mass of BBM, that is, $N_t:=Z_t(\mathbb{R}^d)$ satisfies the following SLLN:
$$\underset{t\rightarrow\infty}{\lim}N_t e^{-\beta t}=M>0 \quad \text{a.s.},$$ 
meaning that the limit exists and is positive almost surely (see for example \cite[Thm.III.7.1]{AN1972}). In what follows, we refer to the mass that falls inside a time-dependent domain as \textit{local mass} if for each $t\geq 0$ the domain is bounded. Otherwise, we use the term \textit{non-local mass}. 

The first result on the SLLN for local mass of BBM goes back to \cite[Corollary, p. 222]{W1967}, where Watanabe established an almost sure result on the asymptotic behavior of certain branching Markov processes, from which the SLLN for local mass in fixed Borel sets emerged as a special case. This was improved by Biggins in \cite{B1992}, saying that almost surely the local mass in a linearly moving Borel set at time $t$ behaves as its expectation as $t\rightarrow\infty$. We note that the result of Biggins \cite[Corollary 4]{B1992} was originally cast in the discrete setting of a branching random walk in discrete time, and then extended in the same paper to the continuous setting of a BBM. Asymptotics of local mass of branching Markov processes other than BBM, involving more general motion components and branching mechanisms, have also been studied. In \cite{AH1976}, weak and strong laws of large numbers were proved for a certain class of branching Markov processes including branching diffusions. In \cite{EK2003}, an interesting dichotomy between local extinction and local exponential growth for certain branching diffusions (and superprocesses) was studied. More recently in \cite{CS2007} and \cite{EHK2010}, SLLN for a more general class of branching diffusions were proved. We refer the reader to \cite[Chapter 5]{E2007} for a brief survey on the LLN for local mass of spatial branching processes and superprocesses.

The growth of non-local mass of BBM, that is, the mass inside unbounded time-dependent domains in $\mathbb{R}^d$ has also been frequently studied; the most popular domain being the complement of a ball centered at the origin with a radius linearly growing in time but at a rate smaller than the speed of BBM. Recall that $M_t:=inf\{r>0:\text{supp}(Z(t))\subseteq B(0,r)\}$, and that by Theorem A for large $t$, typically there are no particles outside of $B(0,rt)$ for $r>\sqrt{2\beta}$, and there are particles outside of $B(0,\bar{r}t)$ for $\bar{r}<\sqrt{2\beta}$. In \cite{CR1988}, the asymptotics of the large deviation probabilities $P(M_t\geq rt)$ for $r>\sqrt{2\beta}$ was found in one dimension. Note that in this case $P(M_t\geq rt)$ is a probability of presence in a region where there would typically be no particles. In \cite{E2004}, the asymptotics of the large deviation probabilities $P(M_t\leq \bar{r}t)$ for $0<\bar{r}<\sqrt{2\beta}$ was found in any dimension. In this case, contrary to the event studied in \cite{CR1988}, since $0<\bar{r}<\sqrt{2\beta}$, $P(M_t\leq \bar{r}t)$ is a probability of absence in the region $(B(0,\bar{r}t))^c$ where there would typically be particles.  

Recently in \cite{S2017} and \cite{A2017}, asymptotic results have been obtained concerning the mass of BBM outside $B(0,rt)$, where $r$ is smaller than the typical $M_t/t$. In \cite{A2017}, upper tail asymptotics in one dimension for $Z_t([\theta\sqrt{2\beta}t,\infty))$ with $0<\theta<1$, were obtained for a strictly dyadic BBM with constant branching rate $\beta$. Due to \eqref{biggins}, the mass inside $[\theta\sqrt{2\beta}t,\infty)$ at time $t$ is typically $\exp[\beta(1-\theta^2)+o(t)]$; and in \cite{A2017}, large deviation probabilities $P(Z_t([\theta\sqrt{2\beta}t,\infty))\geq e^{\beta at})$ are studied for $1-\theta^2<a<1$. In \cite{S2017}, BBMs with space-dependent branching mechanisms and branching rate measures on $\mathbb{R}^d$ satisfying a certain Kato class condition were considered; first the speed (corresponding to typical $M_t/t$ for large $t$) of such processes was obtained, and then the SLLN for the mass outside $B(0,rt)$ was proved for $r$ smaller than the speed of the process.

\subsection{Motivation}

The motivation for present work comes from \cite[Lemma 1]{OE2018}, which was originally cast in the setting of a trap-avoiding problem, and which could be formulated in terms of the local mass of BBM as follows.
\begin{proposition} \label{prop}
Let $0<\theta<1$, and for $t\geq 0$ define $x_t=\theta\sqrt{2\beta}t$. Let $\rho:\mathbb{R}_+\to\mathbb{R}^d$ be a function, where $|\rho(t)|\in B(0,x_t)$ for all large $t$, and let $r>0$ be fixed. Abbreviate $B_t:=B(\rho(t),r)$ and define $\mathsf{S_t}:=\bigcap_{0\leq s\leq t}\left\{Z_s(B_s)=0\right\}$ to be the event that $Z$ has not hit $B=(B_s)_{s\geq 0}$ up to time $t$. Then, the non-hitting probability of $B$ by $Z$ up to time $t$ satisfies the following asymptotics:
\begin{equation}\underset{t\rightarrow\infty}{\limsup}\,\frac{1}{t}\log P\left(\mathsf{S_t}\right)\leq-\beta(1-\theta)\left(\sqrt{2}-1\right). \label{yee}
\end{equation}
\end{proposition}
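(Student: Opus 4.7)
My approach reduces the path-avoidance event to a single-time event and estimates the latter. The starting point is the containment $\mathsf{S}_t\subseteq\{Z_s(B_s)=0\}$ valid for every $s\in[0,t]$, which gives
\[
P(\mathsf{S}_t)\le\inf_{s\in[0,t]}P(Z_s(B_s)=0).
\]
Choosing $s=\gamma t$ for $\gamma\in(0,1)$ reduces the problem to bounding $P(Z_{\gamma t}(B_{\gamma t})=0)$, which I would then optimize over $\gamma$.

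For the single-time bound, the first-moment formula gives, since $|\rho(s)|\le\theta\sqrt{2\beta}s$,
\[
E[Z_s(B_s)]=e^{\beta s}\int_{B_s}(2\pi s)^{-d/2}e^{-|x|^2/(2s)}\,dx\ge C_r\,e^{\beta(1-\theta^2)s}\,s^{-d/2},
\]
so by \eqref{biggins} the local mass in $B_s$ is typically exponentially large. To turn this into an exponential upper bound on $P(Z_s(B_s)=0)$, I would use a two-scale argument: on the interval $[0,s-h]$ the BBM produces $\sim e^{\beta(s-h)}$ ancestor particles whose descendants spread independently over time $h$; bounding the probability that \emph{every} such family misses $B_s$ (using that the ball lies in the subcritical zone so each family has a nonnegligible hitting chance) yields exponential decay. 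Alternatively, one can use the Feynman--Kac / KPP representation $u(s,x)=P_x(Z_s(B_s)=0)$, which satisfies $\partial_s u=\tfrac12\Delta u+\beta(u^2-u)$, and control $u(s,0)$ via a subsolution built from travelling-wave profiles of minimal speed $\sqrt{2\beta}$.

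Combining with step one and optimizing over $\gamma$ should produce an exponential rate of the form $\beta(1-\theta)\cdot c$; the factor $(1-\theta)$ is the natural speed gap between BBM (speed $\sqrt{2\beta}$) and ball (speed $\le\theta\sqrt{2\beta}$), and the Proposition asserts $c\ge\sqrt{2}-1=1/(\sqrt{2}+1)$. The constant $\sqrt{2}-1$ is most naturally obtained as the minimum of an explicit function in $\gamma$ arising from balancing (a) the ``spreading cost'' up to time $\gamma t$ (controlled via the large-deviation rate of \cite{E2004} for the extremal particle being below speed) against (b) the ``hitting cost'' during $[\gamma t,t]$ for the descendants of the leading particle to reach $B_u$.

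\textbf{Main obstacle.} The principal difficulty is converting the first-moment lower bound on $E[Z_s(B_s)]$ into an exponential upper bound on $P(Z_s(B_s)=0)$ with the correct rate: a direct Paley--Zygmund / second-moment attack gives only a constant-order improvement over the trivial bound $P\le 1$. Moreover, the heuristic based on \eqref{biggins} suggests that the sharp decay rate of $P(Z_t(B_t)=0)$ is $\beta(1-\theta^2)$, which is strictly larger than $\beta(1-\theta)(\sqrt{2}-1)$ since $1+\theta>\sqrt{2}-1$; this indicates that the Proposition's bound, while sufficient for the trap-avoidance application of \cite{OE2018}, is not tight, and the constant $\sqrt{2}-1$ reflects the (suboptimal) strategy of routing through an intermediate-time argument rather than a direct moment analysis. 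Rigorously carrying out this intermediate-time optimization, together with the geometric book-keeping forced by the generality of $\rho$, is where the technical work lies.
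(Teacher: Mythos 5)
There is a genuine gap: your text is a plan rather than a proof, and the one step that carries all the difficulty is never executed. Your first reduction, $\mathsf{S}_t\subseteq\{Z_s(B_s)=0\}$ (indeed $s=t$ suffices), is exactly the observation this paper makes when comparing Proposition~\ref{prop} with Corollary~\ref{corollary2}; but note the paper does not prove Proposition~\ref{prop} at all — it is imported from \cite[Lemma 1]{OE2018} as motivation, and within this paper it follows, in strengthened form, from Corollary~\ref{corollary2}, whose proof is the two-stage bootstrap of Lemma~\ref{lemma3} and Theorem~\ref{theorem1}. Measured against that, your two-scale sketch (ancestors at time $s-h$, then ``every family misses $B_s$'') yields at best decay at \emph{some} unspecified rate $c>0$: to claim each ancestor family has a nonnegligible chance of populating $B_s$ you must control where the ancestors are, i.e.\ pay a Brownian displacement cost, and upgrading ``some $c$'' to an explicit constant requires precisely the machinery you skip — conditioning on the first time the population reaches size of order $t$, bounding the probability that any of those particles has travelled a linear distance (Proposition~\ref{proposition3}), and applying Lemma~\ref{lemma3} to each sub-BBM. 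You acknowledge that ``rigorously carrying out this intermediate-time optimization is where the technical work lies,'' which is an admission that the bound \eqref{yee} has not been derived. The Feynman--Kac/KPP alternative is likewise only gestured at; extracting an explicit exponential rate from travelling-wave subsolutions is additional, nontrivial work.

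Worse, the heuristic you use to steer the optimization is incorrect, so there is no reason your proposed $\gamma$-balancing would output $\sqrt{2}-1$ (or any valid constant). You assert that the sharp decay rate of $P(Z_t(B_t)=0)$ ``suggested by \eqref{biggins}'' is $\beta(1-\theta^2)$; in fact Corollary~\ref{corollary2} shows it equals $2\beta(\sqrt{2}-1)(1-\theta)$, which is \emph{strictly smaller} than $\beta(1-\theta^2)=\beta(1-\theta)(1+\theta)$ since $2(\sqrt{2}-1)<1+\theta$. The expectation/SLLN exponent does not govern the lower tail: the cheapest way to empty $B_t$ is not to fight the typical growth but to suppress branching on $[0,\rho t]$ and drift the single particle away from $B_t$, and the relevant trade-off is $\rho+((1-\rho)-\theta)^2/\rho$, minimized at $\rho=(1-\theta)/\sqrt{2}$ — a balance between branching-suppression cost and displacement cost, not between an \cite{E2004}-type confinement cost and a hitting cost over an intermediate window. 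As written, your argument establishes only $\limsup_{t\to\infty}\frac{1}{t}\log P(\mathsf{S}_t)\leq -c$ for some unquantified $c>0$, which is Lemma~\ref{lemma3} with $a=0$, not the stated bound \eqref{yee}.
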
 
\begin{remark} In the context of trap-avoiding, the event $\mathsf{S_t}$ is the event of `survival up to time $t$' for the BBM from a single moving trap $B=(B_s)_{s\geq 0}$ of fixed size. 
\end{remark}
The present work was motivated by a search for a sharp upper bound that improves \eqref{yee}. This is achieved in the next section under Corollary~\ref{corollary2}, where a detailed comparison between Proposition~\ref{prop} and Corollary~\ref{corollary2} is also given.  
  
\smallskip
We conclude this section with an often used terminology and the outline of the paper.
\begin{definition}[SES]
A generic function $g: \mathbb R_+\to \mathbb R$ is called \emph{super-exponentially small (SES)} if $\lim_{t\to\infty}\log g(t)/t=-\infty$.
\end{definition}

{ \bf Outline:} The rest of the paper is organized as follows. In Section~\ref{section2}, we present our main results. In Section~\ref{section3}, we develop the preparation needed, including Lemma~\ref{lemma3} and its proof, for the proofs of Theorem~\ref{theorem1} and Theorem~\ref{theorem2}. In Section~\ref{section4}, we present the proof of Theorem~\ref{theorem1}, which is our main result. The optimization problem given in the statement of Theorem~\ref{theorem1} (see \eqref{ld}) is analyzed in Section~\ref{section5}, and the proof of Theorem~\ref{theorem2} is given in Section~\ref{section6}.

\section{Results}\label{section2}
We introduce further notation before stating our results. For a Borel set $A\subset\mathbb{R}^d$, we use $A^c=\mathbb{R}^d-A$ to denote its complement in $\mathbb{R}^d$. Also, for any event $E$ in some underlying set $\Omega$ of outcomes, we use $E^c$ to denote its complement event in $\Omega$. We use $c$ as a generic positive constant, whose value may change from line to line. If we wish to emphasize the dependence of $c$ on a parameter $p$, then we write $c_p$ or $c(p)$. We write $o(t)$ to refer to $g(t)$, where $g:\mathbb{R}_+\to \mathbb{R}$ is a generic function satisfying $\underset{t\rightarrow\infty}{\lim}g(t)/t=0$. 

Our main result is a large deviation result, giving the asymptotic exponential rate of decay for the probability that the mass of BBM inside a linearly moving ball of fixed size is atypically small on an exponential scale.

\begin{theorem}[Lower tail asymptotics for mass inside a moving ball] \label{theorem1}
Let $0\leq\theta<1$ and $B$ be a fixed ball in $\mathbb{R}^d$. Let $\textbf{e}$ be the unit vector in the direction of the center of $B$ if $B$ is not centered at the origin; otherwise let $\textbf{e}$ be any unit vector. For $t\geq 0$ define $B_t=B+\theta\sqrt{2\beta}t\textbf{e}$. Then, for $0\leq a<1-\theta^2$,
\begin{equation} \underset{t\rightarrow\infty}{\lim}\frac{1}{t}\log P\left(Z_t(B_t)<e^{\beta a t}\right)=-\beta \times I(\theta,a), \label{ld0}
\end{equation}
where
\begin{equation}
I(\theta,a)=\underset{\rho\in(0,\bar\rho]}{\inf}\left[\rho+\frac{\left(\sqrt{(1-\rho)^2-a(1-\rho)}-\theta\right)^2}{\rho}\right], \label{ld}
\end{equation}
and
$$\bar\rho=\bar\rho(\theta,a)=1-a/2-\sqrt{(a/2)^2+\theta^2}.$$
\end{theorem}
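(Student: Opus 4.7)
The exponent $I(\theta,a)$ admits a clean probabilistic reading via decomposition at an intermediate time $\rho t$ with $\rho\in(0,\bar\rho]$. Writing $c_t:=\theta\sqrt{2\beta}\,t\,\textbf{e}$ for the center of $B_t$ and introducing the critical radius $r_*(t):=\sqrt{2\beta}\,t\,\sqrt{(1-\rho)^2-a(1-\rho)}$, the Biggins SLLN \eqref{biggins} says that a particle located at $x$ at time $\rho t$ with $|c_t-x|\geq r_*(t)$ typically produces at most $e^{\beta a t}$ descendants in $B_t$ by time $t$. The constraint $\rho\leq\bar\rho$ is exactly the regime where $r_*(t)\geq |c_t|$, so the origin lies inside the ``bad'' ball $B(c_t,r_*(t))$ and the ancestor must be pushed out. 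The two summands of $I(\theta,a)$ are respectively the cost $e^{-\beta\rho t}$ of no branching in $[0,\rho t]$ and the Gaussian cost of displacing the lone ancestor by $\sqrt{2\beta}\,t\,(\sqrt{(1-\rho)^2-a(1-\rho)}-\theta)$ in direction $-\textbf{e}$ during that window.

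\textbf{Lower bound.} For fixed $\rho\in(0,\bar\rho]$ I would consider the intersection of three independent events: (i) no branching up to time $\rho t$; (ii) the Brownian path of the root particle lies in a small neighborhood of $x_\star:=-(r_*(t)-|c_t|)\textbf{e}$ at time $\rho t$; and (iii) the fresh sub-BBM started from $x_\star$ produces mass strictly below $e^{\beta a t}$ in $B_t$ over time $(1-\rho)t$. The joint probability of (i) and (ii) is
\begin{equation*}
\exp\left(-\beta\rho t-\beta\,t\,\frac{(\sqrt{(1-\rho)^2-a(1-\rho)}-\theta)^2}{\rho}+o(t)\right)
\end{equation*}
by the exponential-lifetime law of the root particle and the Gaussian transition density. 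Event (iii) holds with probability bounded away from zero uniformly in $t$, since by the choice of $x_\star$ the typical mass from $x_\star$ is precisely $e^{\beta a t}$, and a slight outward perturbation of $x_\star$ places the SLLN-typical value strictly below the threshold. Optimizing over $\rho\in(0,\bar\rho]$ then matches the rate in \eqref{ld}.

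\textbf{Upper bound via bootstrap.} The converse direction is more delicate and is carried out via the two-stage bootstrap of Section~\ref{section3} based on Lemma~\ref{lemma3}. For each $\rho\in(0,\bar\rho]$ I would begin with
\begin{equation*}
P(Z_t(B_t)<e^{\beta a t})\leq P(Z_{\rho t}(A_t)=0)+P\left(Z_{\rho t}(A_t)\geq 1,\,Z_t(B_t)<e^{\beta a t}\right),
\end{equation*}
where $A_t\subseteq B(c_t,r_*(t))$ is slightly shrunk so that any particle inside produces typical mass strictly exceeding $e^{\beta a t}$. Lemma~\ref{lemma3} is tailored to give the first term the exponent $\rho+(\sqrt{(1-\rho)^2-a(1-\rho)}-\theta)^2/\rho+o(1)$: it is the asymptotic absence probability from a ball whose center and radius both grow linearly in $t$. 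The second term is to be shown SES using Biggins' SLLN combined with quantitative lower-tail estimates for the sub-BBM; this recursive use of the very inequality being proved is exactly what forces the two-stage structure of the bootstrap. Taking the infimum over $\rho$ then completes the upper bound.

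\textbf{Main obstacle.} The heart of the argument is Lemma~\ref{lemma3}, namely the sharp exponential rate for absence in a ball whose radius grows like $t$, which lies outside the scope of Biggins' SLLN for fixed-size moving balls. The formula \eqref{ld} already encodes the right trade-off --- branching cost $\rho$ against Brownian deviation cost $(\cdot)^2/\rho$ --- and making it rigorous requires decoupling the absence estimate from the ``good-particle strictly exceeds threshold'' SES estimate. Breaking the circularity via the first stage (Lemma~\ref{lemma3}) and then feeding it into the full decomposition is, to my mind, the main technical hurdle.
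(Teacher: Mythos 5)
Your lower bound is essentially the paper's own argument (suppress branching on $[0,\rho t]$, displace the lone particle by $(\sqrt{(1-\rho)^2-a(1-\rho)}-\theta+\varepsilon)\sqrt{2\beta}\,t$ against $\textbf{e}$, then let \eqref{biggins} make the terminal event cost only $e^{o(t)}$), and it is fine. The upper bound, however, has a genuine gap --- two, in fact. First, your remainder term $P(Z_{\rho t}(A_t)\ge 1,\ Z_t(B_t)<e^{\beta a t})$ cannot be SES: with only one particle guaranteed in $A_t$, the Markov property plus Lemma~\ref{lemma3} give at best $e^{-ct}$ with an unquantified constant $c$, so your decomposition can never certify a rate better than $c$, which is precisely the circularity you set out to break. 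SES needs many independent trials, and this is exactly why the paper conditions on the random time $\rho_t=\sup\{\rho:N_{\rho t}\le\lfloor t\rfloor\}$, discretized over $[i/n,(i+1)/n)$: the cost side then consists of a pure population-size event whose exponent $\beta i/n$ is exact by \eqref{prop1}, together with the event that one of only polynomially many particles has travelled linearly far from the origin (Proposition~\ref{proposition3} plus a union bound), while the failure side involves $\lfloor t\rfloor+1$ independent sub-BBMs, each failing with probability at most $e^{-ct}$ by Lemma~\ref{lemma3}, yielding the SES estimate \eqref{kedibaba}.

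Second, your first term is mis-estimated, and Lemma~\ref{lemma3} is not what you describe: it is a crude, unspecified-rate lower-tail bound for the mass in a \emph{fixed-radius} moving ball, not an asymptotic absence probability for a ball of linearly growing radius, and the latter does not decay at rate $\rho+(\sqrt{(1-\rho)^2-a(1-\rho)}-\theta)^2/\rho$ in general. Writing $m=\sqrt{(1-\rho)^2-a(1-\rho)}-\theta$, whenever $m<(\sqrt2-1)\rho$ one can vacate $B(c_t,r_*(t))$ at time $\rho t$ more cheaply than by full suppression plus displacement: suppress branching only on $[0,ut]$, push the lone particle a distance $(m+\rho-u+\varepsilon)\sqrt{2\beta}\,t$ beyond the far boundary, then let the BBM run freely; by Theorem A the whole cloud stays outside with probability $1-o(1)$, and optimizing at $u=(m+\rho)/\sqrt2$ gives the exponent $2(\sqrt2-1)(m+\rho)$, strictly smaller than $\rho+m^2/\rho$ in that regime. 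This regime contains the optimizer (e.g.\ $\theta=0$, $a=2/5$, where $\hat\rho=\sqrt{0.3}$ and $m(\hat\rho)\approx 0.15<(\sqrt2-1)\hat\rho\approx 0.23$), so $P(Z_{\rho t}(A_t)=0)$ is exponentially larger than $e^{-\beta f_{\theta,a}(\rho)t}$ exactly where you need equality, and your decomposition cannot deliver the sharp constant even in principle. The structural point the paper exploits is that the exponent $\rho$ must be charged to keeping the population of size $O(t)$ up to time $\rho t$, and the Gaussian term to one of those $O(t)$ particles being displaced linearly from the origin; your single spatial absence event conflates these two costs and is strictly cheaper than their sum.
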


\begin{remark} In terms of the BBM's optimal strategies for realizing the large deviation event 
\newline $\left\{Z_t(B_t)< e^{\beta a t}\right\}$, this means (see the proof of Theorem~\ref{theorem1} for details) to realize $\left\{Z_t(B)< e^{\beta a t}\right\}$: \newline
the system suppresses the branching completely, and sends the single particle to a distance of $(\sqrt{(1-\hat{\rho})^2-a(1-\hat{\rho})}-\theta)t+o(t)$ in the opposite direction of the center of $B_t$ over $[0,\hat{\rho}t]$, and then behaves `normally' in the remaining interval $[\hat{\rho}t,t]$,  
where $\hat\rho$ denotes the unique minimizer (see Proposition~\ref{prop5}) of the optimization problem in \eqref{ld}. 	
\end{remark}

Two corollaries of interest follow immediately from Theorem~\ref{theorem1} as special cases. Setting $\theta=0$ in Theorem~\ref{theorem1} covers the case of fixed balls, and yields Corollary~\ref{corollary1}. Next, setting $a=0$ in Theorem~\ref{theorem1} gives the asymptotic exponential rate of decay for the probability that no particle of BBM falls inside a linearly moving ball of fixed size, and yields Corollary~\ref{corollary2}. The optimization problem on the right-hand side of \eqref{ld} cannot be solved to find a closed-form expression for $I(\theta,a)$ in the general case, where both $\theta$ and $a$ are nonzero; we analyze this case in Section~\ref{section5}.  

\begin{corollary}[Lower tail asymptotics for mass inside a fixed ball] \label{corollary1}
Let $B\subseteq \mathbb{R}^d$ be any fixed ball. Then, for $0\leq a<1$,
\begin{equation} \underset{t\rightarrow\infty}{\lim}\frac{1}{t}\log P\left(Z_t(B)<e^{\beta a t}\right)= \begin{cases}
-\beta\left[2\sqrt{2(1-a)}-2+a \right], &  \:0\leq a<1/2, \\
-\beta(1-a), & \:1/2\leq a<1.
\end{cases}  \label{cor1}
\end{equation}
\end{corollary}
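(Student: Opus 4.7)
The plan is to specialize Theorem~\ref{theorem1} to $\theta=0$ (noting that when $\theta=0$ the moving ball $B_t$ reduces to the fixed ball $B$, so the probability on the left-hand side of \eqref{cor1} matches the left-hand side of \eqref{ld0}), and then to explicitly solve the one-dimensional optimization problem defining $I(0,a)$ via elementary calculus.

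Substituting $\theta=0$ into \eqref{ld}, the square in the numerator disappears and I get
\[
I(0,a)=\inf_{\rho\in(0,\bar\rho]}\left[\rho+\frac{(1-\rho)^2-a(1-\rho)}{\rho}\right],\qquad \bar\rho=1-a,
\]
after noting that $\bar\rho(0,a)=1-a/2-\sqrt{(a/2)^2}=1-a$. Next I would expand the bracketed expression algebraically. Writing $(1-\rho)^2-a(1-\rho)=1-2\rho+\rho^2-a+a\rho$ and dividing by $\rho$, the integrand collapses to
\[
f(\rho):=2\rho+\frac{1-a}{\rho}+a-2.
\]

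Now the optimization is routine. Setting $f'(\rho)=2-(1-a)/\rho^2=0$ gives the unique positive critical point $\rho^\ast=\sqrt{(1-a)/2}$, and $f''>0$, so $f$ is strictly convex on $(0,\infty)$, strictly decreasing on $(0,\rho^\ast]$ and strictly increasing on $[\rho^\ast,\infty)$. The interior minimum value is
\[
f(\rho^\ast)=2\sqrt{2(1-a)}+a-2.
\]
The critical point lies in the feasible interval $(0,\bar\rho]=(0,1-a]$ if and only if $\sqrt{(1-a)/2}\le 1-a$, which (for $a<1$) is equivalent to $a\le 1/2$. Thus for $0\le a<1/2$ the infimum equals $2\sqrt{2(1-a)}-2+a$, matching the first branch of \eqref{cor1}.

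For $1/2\le a<1$, the critical point $\rho^\ast$ exceeds $\bar\rho$, so $f$ is still decreasing at $\rho=\bar\rho$ and the minimum on $(0,\bar\rho]$ is attained at the right endpoint $\rho=1-a$. A direct substitution yields
\[
f(1-a)=2(1-a)+\frac{1-a}{1-a}+a-2=1-a,
\]
which matches the second branch of \eqref{cor1}. Combining the two cases and invoking Theorem~\ref{theorem1} finishes the proof. The only point requiring care is the boundary analysis at $a=1/2$ and the verification that no endpoint issue at $\rho\to 0^+$ interferes (since $f(\rho)\to+\infty$ there, this is immediate); I do not anticipate any real obstacle, as the corollary is essentially a calculus exercise once Theorem~\ref{theorem1} is in hand.
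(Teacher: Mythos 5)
Your proposal is correct and takes the same route as the paper: the paper's proof likewise sets $\theta=0$ in \eqref{ld}, obtains $\bar\rho=1-a$, and identifies the minimizer $\hat\rho=\sqrt{(1-a)/2}$ for $0\leq a<1/2$ and $\hat\rho=1-a$ for $1/2\leq a<1$. Your calculus details (the simplification to $2\rho+(1-a)/\rho+a-2$, the convexity, and the endpoint case) correctly fill in what the paper leaves as a one-line verification.
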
 

\begin{proof} Set $\theta=0$ in \eqref{ld}. Then, $\bar\rho=1-a$, and over $\rho\in(0,1-a]$ the functional on the right-hand side of \eqref{ld} has a unique minimizer at $\hat{\rho}=\sqrt{(1-a)/2}$ for $0\leq a <1/2$, and at $\hat{\rho}=1-a$ for $1/2\leq a<1$.  
\end{proof}

\begin{remark} Corollary~\ref{corollary1} says that there is a continuous phase transition at $a=1/2$ in the asymptotic behavior of $P\left(Z_t(B)< e^{\beta a t}\right)$, which is revealed by the Lyapunov exponent in \eqref{cor1} (see Figure 1). In terms of the BBM's optimal strategies for realizing the large deviation event $\left\{Z_t(B)< e^{\beta a t}\right\}$, this means to realize $\left\{Z_t(B)< e^{\beta a t}\right\}$:
\begin{itemize}
	\item for $0\leq a<1/2$, the system suppresses the branching completely, and sends the single particle to a distance of $((1-\hat{\rho})^2-a(1-\hat{\rho}))t+o(t)$ away from the origin over $[0,\hat{\rho}t]$, and then behaves `normally' in the remaining interval $[\hat{\rho}t,t]$;  
	\item for $1/2\leq a<1$, the system only suppresses the branching completely over $[0,\hat{\rho}t]$, and behaves `normally' otherwise. This means, the parameter $a$ is high enough so that there is no need to initially move the single particle to a linear distance away from $B$ in order to realize $\left\{Z_t(B)< e^{\beta a t}\right\}$.  
\end{itemize}	
\end{remark}

\vspace{1cm}

\setlength{\unitlength}{0.4cm}
\begin{picture}(36,10)
	\put(0,0){\line(12,0){12}}
  \put(0,0){\line(0,10){10}}
  \put(-1,-.3){$0$}
  \put(12.5,-.3){$a$}
	\put(-.3,11){$\hat\rho(a)$}
	\put(-1.5,10){(i)}
  {\thicklines
   \qbezier(0,8)(2.5,7.7)(5,5)
   \qbezier(5,5)(7,3)(9.9,0.1)
  }
  \qbezier[30](0,5)(3,5)(5,5)
  \qbezier[30](5,0)(5,3)(5,5)
  \put(5,5){\circle*{.25}}
	\put(10,0){\circle{.25}}
  \put(4.5,-1){$1/2$}
  \put(-2,4.5){$1/2$}
  \put(9.5,-1){$1$}
	\put(-2.7,7.5){$\sqrt{2}/2$}
	
	\put(18,0){\line(12,0){12}}
  \put(18,0){\line(0,10){10}}
  \put(17,-.3){$0$}
  \put(30.5,-.3){$a$}
	\put(17.7,11){$I(0,a)$}
	\put(16,10){(ii)}
  {\thicklines
   \qbezier(18,8)(20.5,7.5)(23,5)
   \qbezier(23,5)(25,3)(27.9,0.1)
  }
  \qbezier[30](18,5)(21,5)(23,5)
  \qbezier[30](23,0)(23,3)(23,5)
  \put(23,5){\circle*{.25}}
	\put(28,0){\circle{.25}}
  \put(22.5,-1){$1/2$}
  \put(16,4.5){$1/2$}
  \put(27.5,-1){$1$}
	\put(13.8,7.5){$2(\sqrt{2}-1)$}
  
	\put(-1.8,-2.5)
  {\small
   Figure\ 1.~ Qualitative plot of: (i) $a\mapsto \hat\rho(a)$, (ii) $a\mapsto I(0,a)$ when $\theta=0$. Phase transition occurs at $a=1/2$.
  \normalsize
  }
\end{picture}
 
\vspace{1.5cm}

\begin{corollary}[Asymptotic probability of no particle inside a moving ball] \label{corollary2}
Let $0\leq\theta<1$ and $B$ be a fixed ball in $\mathbb{R}^d$. Let $\textbf{e}$ be the unit vector in the direction of the center of $B$ if $B$ is not centered at the origin; otherwise let $\textbf{e}$ be any unit vector. For $t\geq 0$ define $B_t=B+\theta\sqrt{2\beta}t\textbf{e}$. Then, 
\begin{equation} \underset{t\rightarrow\infty}{\lim}\:\frac{1}{t}\log P\left(Z_t(B_t)=0\right)=-2\beta(\sqrt{2}-1)(1-\theta).
\end{equation}
\end{corollary}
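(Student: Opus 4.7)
The plan is to derive Corollary~\ref{corollary2} as an immediate consequence of Theorem~\ref{theorem1} applied with $a=0$, followed by an explicit solution of the optimization problem \eqref{ld} in this special case. The key observation is that $Z_t(B_t)$ takes values in the non-negative integers, so
\[
\{Z_t(B_t)=0\}=\{Z_t(B_t)<1\}=\{Z_t(B_t)<e^{\beta\cdot 0\cdot t}\},
\]
and since the admissible range in Theorem~\ref{theorem1} is $0\le a<1-\theta^2$, the value $a=0$ is allowed. Hence
\[
\lim_{t\to\infty}\frac{1}{t}\log P(Z_t(B_t)=0)=-\beta\cdot I(\theta,0),
\]
and the task reduces to showing $I(\theta,0)=2(\sqrt{2}-1)(1-\theta)$.

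To compute $I(\theta,0)$, I would specialize the formula for $\bar\rho$ in the statement of Theorem~\ref{theorem1}: setting $a=0$ gives $\bar\rho(\theta,0)=1-\sqrt{\theta^2}=1-\theta$. For $\rho\in(0,1-\theta]$ we have $1-\rho\ge\theta\ge 0$, so $\sqrt{(1-\rho)^2-a(1-\rho)}\big|_{a=0}=1-\rho$ and the functional to minimize becomes
\[
f(\rho):=\rho+\frac{(1-\rho-\theta)^2}{\rho},\qquad\rho\in(0,1-\theta].
\]
A direct differentiation yields $f'(\rho)=2-(1-\theta)^2/\rho^2$, so the unique critical point in $(0,1-\theta]$ is $\hat\rho=(1-\theta)/\sqrt{2}$, which lies strictly inside the interval since $1/\sqrt{2}<1$. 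Since $f(\rho)\to\infty$ as $\rho\downarrow 0$ and $f$ is smooth on $(0,1-\theta]$, this critical point is the global minimum.

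It then remains to evaluate $f(\hat\rho)$. Substituting $\hat\rho=(1-\theta)/\sqrt{2}$ gives
\[
f(\hat\rho)=\frac{1-\theta}{\sqrt{2}}+\sqrt{2}(1-\theta)\left(1-\frac{1}{\sqrt{2}}\right)^{\!2}
=(1-\theta)\left[\frac{\sqrt{2}}{2}+\sqrt{2}\left(\frac{3}{2}-\sqrt{2}\right)\right]
=2(\sqrt{2}-1)(1-\theta),
\]
as desired. Multiplying by $-\beta$ yields the stated limit. There is essentially no obstacle here since Theorem~\ref{theorem1} has already been invoked; the only care needed is verifying that the interior critical point $\hat\rho=(1-\theta)/\sqrt{2}$ does lie inside the admissible interval $(0,\bar\rho\,]=(0,1-\theta]$, which is immediate.
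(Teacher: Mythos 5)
Your proposal is correct and follows exactly the paper's route: set $a=0$ in Theorem~\ref{theorem1}, identify $\{Z_t(B_t)=0\}$ with $\{Z_t(B_t)<e^{\beta\cdot 0\cdot t}\}$, note $\bar\rho=1-\theta$, and minimize $\rho+(1-\rho-\theta)^2/\rho$ over $(0,1-\theta]$ to get the unique minimizer $\hat\rho=(1-\theta)/\sqrt{2}$ and the value $2(\sqrt{2}-1)(1-\theta)$. The only difference is that you spell out the elementary calculus that the paper leaves to the reader; the computations check out.
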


\begin{proof} Set $a=0$ in \eqref{ld}. Then, $\bar\rho=1-\theta$, and over $\rho\in(0,1-\theta]$ the functional on the right-hand side of \eqref{ld} has a unique minimizer at $\hat\rho=(1-\theta)/\sqrt{2}$.
\end{proof}

As stated in the introduction, the starting point of the present work is \cite[Lemma 1]{OE2018}, which is equivalently formulated in terms of local mass of BBM in Proposition~\ref{prop}. Corollary~\ref{corollary2} improves Proposition~\ref{prop} in two respects:
\begin{itemize}
	\item it sharpens the Lyapunov exponent in Proposition~\ref{prop} from $\beta(\sqrt{2}-1)\theta$ to $2\beta(\sqrt{2}-1)\theta$,
	\item in Corollary~\ref{corollary2}, the large deviation event is $\left\{Z_t(B_t)=0\right\}$, whereas in Proposition~\ref{prop} it was $\bigcap_{0\leq s\leq t}\left\{Z_s(B_s)=0\right\}$. Clearly, $\bigcap_{0\leq s\leq t}\left\{Z_s(B_s)=0\right\}\subseteq \left\{Z_t(B_t)=0\right\}$. 
\end{itemize}

Next, we have a large deviation result, giving the asymptotic exponential rate of decay for the probability that the mass of BBM outside a ball with a linearly growing radius and fixed center, is atypically small on an exponential scale.

\begin{theorem}[Lower tail asymptotics for mass outside a linearly expanding ball] \label{theorem2}
Let $0<\theta<1$, and for $t\geq 0$ let $x_t=\theta\sqrt{2\beta}t$ and denote $\widehat{B}_t=B(0,x_t)$. Then, for $0\leq a<1-\theta^2$,
\begin{equation} \underset{t\rightarrow\infty}{\lim}\frac{1}{t}\log P\left(Z_t(\widehat{B}_t^c)< e^{\beta a t}\right)=-\beta\left[1-a/2-\sqrt{\theta^2+(a/2)^2}\right]. \label{ld2}
\end{equation}
\end{theorem}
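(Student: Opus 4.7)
The plan is to establish matching lower and upper bounds on the exponential rate, both equal to $-\beta\bar\rho(\theta,a)$ with $\bar\rho(\theta,a):=1-a/2-\sqrt{\theta^2+(a/2)^2}$ (the quantity on the right of \eqref{ld2}). The key calibration is a first-moment computation: starting from the origin, the expected number of descendants of a BBM of duration $(1-\rho)t$ lying in $\widehat{B}_t^c$ is of order $\exp[\beta\tilde a(\rho)t]$ with $\tilde a(\rho)=[(1-\rho)^2-\theta^2]/(1-\rho)$, and $\tilde a(\rho)=a$ precisely when $\rho=\bar\rho$. The lower bound will exploit $\rho>\bar\rho$ (so the typical mass produced in $\widehat{B}_t^c$ is subthreshold), while the upper bound will exploit $\rho<\bar\rho$ (so conditional on typical positions at time $\rho t$ the probability of being subthreshold is bounded away from $1$).

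For the lower bound, fix $\varepsilon>0$ and set $\rho=\bar\rho+\varepsilon$. The no-branching event $A=\{N_{\rho t}=1\}$ has probability $e^{-\beta\rho t}$, and on $A$ the unique particle sits at a Gaussian position $\xi$ with $|\xi|\leq t^{3/4}$ except on a set of SES probability. Conditional on $A\cap\{|\xi|\leq t^{3/4}\}$ the restarted BBM has duration $(1-\rho)t$, and a direct first-moment calculation gives $E[Z_t(\widehat{B}_t^c)\mid A,\xi]=\exp[\beta\tilde a(\rho)t+o(t)]$ uniformly in $\xi$, with $\tilde a(\rho)<a$. Markov's inequality yields $P(Z_t(\widehat{B}_t^c)\geq e^{\beta a t}\mid A,\xi)\to 0$, hence $P(Z_t(\widehat{B}_t^c)<e^{\beta a t})\geq e^{-\beta(\bar\rho+\varepsilon)t-o(t)}$. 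Letting $\varepsilon\downarrow 0$ concludes.

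For the upper bound, fix $\rho\in(0,\bar\rho)$ and condition on the configuration of $Z$ at time $\rho t$. Writing $Y^{(i)}$ for the number of descendants in $\widehat{B}_t^c$ at time $t$ of a particle $i$ at position $y_i$ at time $\rho t$, we have $Z_t(\widehat{B}_t^c)=\sum_i Y^{(i)}$ with the $Y^{(i)}$ conditionally independent given $\{y_i\}$, and the event forces $Y^{(i)}<e^{\beta a t}$ for every $i$. A translation identifies $Y^{(i)}$, for $y_i$ near the origin, with the mass of an origin-started BBM of duration $(1-\rho)t$ outside $B(-y_i,\theta\sqrt{2\beta}t)$; the rescaled parameters $\tilde\theta=\theta/(1-\rho)$ and $\tilde a=a/(1-\rho)$ satisfy $\tilde a<1-\tilde\theta^2$ exactly when $\rho<\bar\rho$. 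Lemma~\ref{lemma3} (the first stage of the bootstrap) is then invoked to supply a uniform-in-$t$ constant $q=q(\theta,a,\rho)<1$ with
\[
P\bigl(Y^{(i)}<e^{\beta a t}\bigm|y_i\bigr)\leq q
\]
for $y_i$ in a typical region of linear size $o(t)$, while the contribution of atypical position configurations at time $\rho t$ is SES by standard BBM spread estimates. Thus
\[
P(Z_t(\widehat{B}_t^c)<e^{\beta a t})\leq E[q^{N_{\rho t}}]+\mathrm{SES}.
\]
Since $N_{\rho t}$ has the Yule law with parameter $p=e^{-\beta\rho t}$, its generating function yields $E[q^{N_{\rho t}}]=pq/[1-(1-p)q]=O(e^{-\beta\rho t})$. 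Letting $\rho\uparrow\bar\rho$ closes the gap.

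The main obstacle is the constant bound $P(Y^{(i)}<e^{\beta a t}\mid y_i)\leq q<1$ for near-origin $y_i$: it says that the mass of a BBM outside a subcritical moving ball does not collapse far below its mean with probability close to one. Since $\rho<\bar\rho$ makes the conditional mean of $Y^{(i)}$ of order $e^{\beta\tilde a^\ast t}$ with $\tilde a^\ast>a$, the natural tool is a Paley--Zygmund inequality built on a many-to-two second-moment estimate for BBM mass outside a moving ball, and this is exactly the content of Lemma~\ref{lemma3}. Everything else---the geometric rescaling, the Yule generating-function calculation, the SES treatment of atypical positions, and the optimization $\rho\uparrow\bar\rho$---is routine once Lemma~\ref{lemma3} is in hand.
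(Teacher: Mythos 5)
Your skeleton is essentially the paper's: a lower bound obtained by suppressing branching on $[0,\rho t]$ with $\rho$ slightly above $\bar\rho$ and then showing the lone particle's sub-BBM cannot deliver $e^{\beta a t}$ particles to $\widehat{B}_t^c$ (you do this by a first-moment/Markov bound, the paper via \eqref{biggins}; both work, and your calibration $\tilde a(\bar\rho)=a$ is correct), and an upper bound obtained by conditioning on the population at time $\rho t$ with $\rho$ slightly below $\bar\rho$, applying Lemma~\ref{lemma3} (translated and time-rescaled; your check that $\tilde a<1-\tilde\theta^2$ iff $\rho<\bar\rho$ is right) to each particle's sub-BBM, and paying $e^{-\beta\rho t}$ for having few particles; your geometric generating-function computation is a tidy substitute for the paper's splitting at the level $\lfloor t\rfloor$.

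There is, however, one genuine flaw in the upper bound as written: you restrict to particle positions $y_i$ ``in a typical region of linear size $o(t)$'' at time $\rho t$ and assert that the contribution of atypical position configurations is SES. That is false. By Theorem A the BBM spreads linearly, so at time $\rho t$ there are, with probability tending to one, particles at distance of order $t$ from the origin; the event that all particles lie within $o(t)$ of the origin is itself exponentially unlikely, so your ``atypical'' event has probability close to $1$ and the decomposition yields a useless error term (and even a restriction to a ball of radius $Ct$ would only be exponentially, never super-exponentially, unlikely). The cure, which is exactly the observation the paper's proof rests on, is that no control on positions is needed at all: since the target set is the complement of $B(0,\theta\sqrt{2\beta}t)$, every point of $\mathbb{R}^d$ lies within distance $\theta\sqrt{2\beta}t$ of $\widehat{B}_t^c$, so the bound $P\left(Y^{(i)}<e^{\beta a t}\mid y_i\right)\leq e^{-ct}\leq q<1$ coming from the translated/rescaled Lemma~\ref{lemma3} holds uniformly over all $y_i\in\mathbb{R}^d$, the worst case being a particle at the origin; with that replacement your argument closes and gives the rate $\beta\rho$ for every $\rho<\bar\rho$. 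Two minor further points: Lemma~\ref{lemma3} is not a Paley--Zygmund/second-moment statement (the paper proves it from \eqref{biggins} by a two-stage splitting), though since you only use its conclusion this does not affect the logic; and in the lower bound the event $\{|\xi|>t^{3/4}\}$ has probability of order $e^{-c\sqrt{t}}$, which is not SES, but all you need is that it is $o(1)$, so that step is harmless.
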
 

Note that by setting $a=0$ in \eqref{ld2}, we obtain the asymptotic rate of decay for the probability of confinement of BBM into `subcritical balls' \textit{at time $t$}, which expectedly turns out to be identical to that of probability of confinement of BBM into `subcritical balls' \textit{throughout the interval $[0,t]$} (see \cite[Thm.2]{E2004}).

\section{Preparations}\label{section3}

In this section, we first list a few well-known results concerning the global growth of branching systems, and asymptotic probability of atypically large Brownian displacements. These results will be useful in the proofs of the main theorems and Lemma~\ref{lemma3}. Then, we state and prove Lemma~\ref{lemma3}, which will be the crucial ingredient in the proofs of the upper bounds of Theorem~\ref{theorem1} and Theorem~\ref{theorem2}, which are done via bootstrap arguments that are composed of two steps. Lemma~\ref{lemma3} constitutes the first of these steps, and can be viewed as a weaker form of Theorem~\ref{theorem1}. 

The following two propositions are standard in the theory of branching processes. For the proofs of Proposition~\ref{proposition1} and Proposition~\ref{proposition2}, see for example \cite[Sect.8.11]{KT1975} and \cite[Sect.8.8]{KT1975}, respectively.

\begin{proposition}[Distribution of mass in branching systems]\label{proposition1}
For a strictly dyadic continuous-time branching process $N=(N(t))_{t\geq 0}$ with constant branching rate $\beta>0$, the probability distribution at time $t$ is given by 
\begin{equation} P(N(t)=k)=e^{-\beta t}(1-e^{-\beta t})^{k-1},\quad k\geq 1, \nonumber
\end{equation}
from which it follows that
\begin{equation} P(N(t)>k)=(1-e^{-\beta t})^k  \label{prop1}.
\end{equation}
\end{proposition}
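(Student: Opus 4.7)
The plan is to view $N$ as a Yule (pure-birth) Markov chain on $\{1,2,\ldots\}$ with forward jump rates $\beta k$ from state $k$: each of the $k$ currently alive particles branches independently at rate $\beta$, yielding a net increase of one per branching event. Writing $p_k(t):=P(N(t)=k)$ with $p_1(0)=1$, the Kolmogorov forward equations are
\[
p_k'(t)=\beta(k-1)p_{k-1}(t)-\beta k\,p_k(t),\qquad k\geq 1,
\]
with the convention $p_0\equiv 0$.

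First I would package this infinite ODE system into a single first-order PDE via the probability generating function $F(s,t):=\sum_{k\geq 1}p_k(t)s^k$. A termwise computation gives
\[
\partial_t F(s,t)=\beta s(s-1)\,\partial_s F(s,t),\qquad F(s,0)=s.
\]
I would then solve by the method of characteristics. The characteristic ODE $ds/dt=-\beta s(s-1)$ separates to $\frac{ds}{s(s-1)}=-\beta\,dt$, which integrates to give the conserved quantity $\frac{s-1}{s}e^{\beta t}$; imposing the initial data $F(s,0)=s$ fixes the explicit closed form
\[
F(s,t)=\frac{s\,e^{-\beta t}}{1-s(1-e^{-\beta t})}.
\]
Expanding the right-hand side as a geometric series in $s(1-e^{-\beta t})$ and reading off the coefficient of $s^k$ yields the claimed $p_k(t)=e^{-\beta t}(1-e^{-\beta t})^{k-1}$, which identifies $N(t)$ as a Geometric$(e^{-\beta t})$ variable on $\{1,2,\ldots\}$. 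The tail formula \eqref{prop1} then follows from the standard geometric-tail identity
\[
P(N(t)>k)=\sum_{j>k}e^{-\beta t}(1-e^{-\beta t})^{j-1}=(1-e^{-\beta t})^k.
\]

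There is no conceptual obstacle; the only care required is the algebraic verification that the candidate $F$ satisfies both the PDE and the initial condition. A cleaner alternative worth noting avoids generating functions entirely: the successive inter-branching times are independent with $T_j\sim\textrm{Exp}(\beta j)$, and the classical identity $\sum_{j=1}^k T_j\stackrel{d}{=}\max(Y_1,\ldots,Y_k)$ for i.i.d.\ $Y_j\sim\textrm{Exp}(\beta)$ immediately yields $P(N(t)>k)=P(\max_j Y_j\leq t)=(1-e^{-\beta t})^k$, from which the mass function is recovered by differencing.
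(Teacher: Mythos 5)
Your proof is correct. The paper does not prove Proposition~1 at all; it simply cites the standard reference (Karlin and Taylor, Sect.~8.11), since $N$ is the classical Yule process. Your generating-function computation is essentially the textbook derivation found there: the forward equations, the first-order PDE $\partial_t F=\beta s(s-1)\partial_s F$, and the solution $F(s,t)=se^{-\beta t}/\bigl(1-s(1-e^{-\beta t})\bigr)$, which expands to the geometric law and hence to $P(N(t)>k)=(1-e^{-\beta t})^k$. The alternative you sketch — that the $k$-th branching time is $\sum_{j=1}^k T_j$ with $T_j\sim\mathrm{Exp}(\beta j)$, which by the exponential order-statistics identity equals in law $\max(Y_1,\dots,Y_k)$ for i.i.d.\ $Y_j\sim\mathrm{Exp}(\beta)$ — is a genuinely slicker route to \eqref{prop1}, since it gives the tail probability directly without solving any ODEs, with the mass function recovered by differencing; either argument fully suffices for the paper's purposes.
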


\begin{proposition}[Expected mass in branching systems]\label{proposition2}
For a strictly dyadic continuous-time branching process $N=(N(t))_{t\geq 0}$ with constant branching rate $\beta>0$, the expected mass at time $t$ is given by 
\begin{equation} E[N(t)]=e^{\beta t}. \label{prop2}
\end{equation}
\end{proposition}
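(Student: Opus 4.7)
The plan is to read off the identity directly from the explicit distribution given in Proposition~\ref{proposition1}. The law $P(N(t)=k)=e^{-\beta t}(1-e^{-\beta t})^{k-1}$ for $k\geq 1$ is the geometric distribution on $\{1,2,3,\ldots\}$ with success parameter $p=e^{-\beta t}$, whose mean equals $1/p$. The cleanest execution uses the tail-sum formula for non-negative integer-valued random variables together with \eqref{prop1}:
$$E[N(t)]=\sum_{k=0}^{\infty}P(N(t)>k)=\sum_{k=0}^{\infty}(1-e^{-\beta t})^k=\frac{1}{1-(1-e^{-\beta t})}=e^{\beta t},$$
where the geometric series converges because $1-e^{-\beta t}\in[0,1)$ for $t>0$, and the edge case $t=0$ is trivial since $N(0)=1$ deterministically.

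As an independent consistency check, one could rederive the identity via a Kolmogorov forward argument that does not invoke Proposition~\ref{proposition1}. Setting $m(t):=E[N(t)]$ and conditioning on whether any particle branches during a small increment $[t,t+h]$, each of the $N(t)$ currently alive particles branches independently with probability $\beta h+o(h)$ and contributes exactly one extra particle upon branching. Taking expectations gives $m(t+h)=m(t)+\beta h\,m(t)+o(h)$, and dividing by $h$ and letting $h\downarrow 0$ yields the ODE $m'(t)=\beta m(t)$ with initial condition $m(0)=1$, whose unique solution is $m(t)=e^{\beta t}$.

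There is no substantive obstacle here: the statement is essentially a one-line corollary of Proposition~\ref{proposition1} once one recognizes the geometric law in \eqref{prop1}, and the only mild care needed is the standard justification of the tail-sum identity for non-negative integer-valued random variables.
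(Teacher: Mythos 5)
Your proof is correct. The paper itself does not supply a proof of Proposition~\ref{proposition2}; it simply cites a standard reference (Karlin and Taylor, Sect.\ 8.8). Your derivation via the tail-sum formula applied to the geometric law from Proposition~\ref{proposition1}, together with the independent consistency check via the Kolmogorov forward equation $m'(t)=\beta m(t)$, $m(0)=1$, is exactly the sort of elementary argument the citation points to, and both routes you give are sound. Nothing is missing.
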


The following result is taken from \cite{OCE2017} (see Lemma 5 and its proof therein). 

\begin{proposition}[Linear Brownian displacements]\label{proposition3}
Let $X=(X(t))_{t\geq 0}$ represent a standard $d$-dimensional Brownian motion starting at the origin, and $\mathbf{P}_0$ the corresponding probability. Then, for $\gamma>0$,
\begin{equation}  \mathbf{P}_0\left(\underset{0\leq s\leq t}{\sup}|X(s)|>\gamma t\right)=\exp[-\gamma^2t/2+o(t)].  \label{prop3}
\end{equation}
\end{proposition}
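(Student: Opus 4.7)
The plan is to exploit the Brownian scaling $\sup_{0\leq s\leq t}|X(s)| \stackrel{d}{=} \sqrt{t}\,M$, where $M := \sup_{0\leq u\leq 1}|X(u)|$, which reduces \eqref{prop3} to showing $\mathbf{P}_0(M>\gamma\sqrt{t})=\exp[-\gamma^2 t/2+o(t)]$. This in turn will follow from matching Gaussian tail estimates on $M$ proved by elementary arguments.

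For the lower bound, I would simply drop the supremum: $\mathbf{P}_0(M > r) \geq \mathbf{P}_0(|X(1)| > r)$. Now $|X(1)|$ has the chi distribution with $d$ degrees of freedom, whose tail satisfies $\mathbf{P}_0(|X(1)|>r) = \exp[-r^2/2 + O(\log r)]$ as $r\to\infty$, by direct integration of the density proportional to $y^{d-1}e^{-y^2/2}$. Setting $r = \gamma\sqrt{t}$ gives the desired $\exp[-\gamma^2 t/2 + o(t)]$ lower bound on \eqref{prop3}.

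For the upper bound, a blunt coordinate-wise union bound only produces $-\gamma^2/(2d)$ in the exponent, losing a factor of $d$. To obtain the sharp $-\gamma^2/2$, I would approximate the unit sphere of $\mathbb{R}^d$ by a $\delta$-net $\{e_1,\ldots,e_N\}$ of cardinality $N\leq C_d\delta^{-(d-1)}$. Since any unit vector $e^{*}$ admits some $e_i$ in the net with $e^{*}\cdot e_i \geq 1-\delta^2/2$, it follows that $|x| \leq (1-\delta^2/2)^{-1}\max_i e_i\cdot x$ for every $x\in\mathbb{R}^d$, whence
$$\{M > r\}\subseteq \bigcup_{i=1}^N \left\{\sup_{0\leq u\leq 1} e_i\cdot X(u) \geq (1-\delta^2/2)r\right\}.$$
Each projection $e_i\cdot X$ is a standard one-dimensional Brownian motion, so the reflection principle yields $\mathbf{P}_0(\sup_{0\leq u\leq 1} e_i \cdot X(u) \geq s) \leq 2 e^{-s^2/2}$. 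A union bound then gives
$$\mathbf{P}_0(M > r) \leq 2N \exp\!\left[-\tfrac{1}{2}(1-\delta^2/2)^2 r^2\right].$$
Taking $r = \gamma\sqrt{t}$ and $\delta = \delta_t = t^{-1/4}$, one has $\log N = O(\log t)$ and $(1-\delta_t^2/2)^2 = 1 - O(t^{-1/2})$, so both of these corrections contribute only $o(t)$ to the exponent, producing the required upper bound $\exp[-\gamma^2 t/2 + o(t)]$.

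The principal obstacle is the sharpness of the upper-bound exponent: one must resist the temptation to bound $|X(s)|$ by $\sqrt{d}\max_i |X^{(i)}(s)|$, since doing so degrades the rate by a factor of $d$. The $\delta$-net device, with resolution chosen to shrink subpolynomially in $t$, is precisely what balances the polynomial cover cost $\log N = O(\log t)$ against the angular approximation error $O(\delta_t^{2})$ so that the leading-order Gaussian rate $-\gamma^2/2$ is preserved.
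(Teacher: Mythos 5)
Your argument is correct. Note, however, that the paper does not prove Proposition~\ref{proposition3} at all: it simply imports the statement from \cite{OCE2017} (Lemma~5 there), so any self-contained derivation is by construction a different route. Your route is sound: the scaling reduction to $M=\sup_{0\leq u\leq 1}|X(u)|$ is exact; the lower bound via the chi-tail $\mathbf{P}_0(|X(1)|>r)=\exp[-r^2/2+O(\log r)]$ is standard; and for the upper bound, the $\delta$-net of the sphere with $e^*\cdot e_i\geq 1-\delta^2/2$, the reflection principle for each one-dimensional projection $e_i\cdot X$, and the union bound with $\delta_t=t^{-1/4}$ correctly keep both the covering cost ($O(\log t)$) and the angular loss ($O(\sqrt{t})$ in the exponent) at order $o(t)$, yielding the sharp rate $-\gamma^2/2$ rather than the lossy $-\gamma^2/(2d)$. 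One remark: the net can be avoided altogether by a strong Markov argument at the first hitting time $\tau_r$ of the sphere of radius $r$ — from the hitting point $y$, the tangent half-space $\{x:x\cdot y\geq r|y|/|y|\cdot 1\}$ lies outside $B(0,r)$ and is reached at the terminal time with probability $1/2$ by symmetry, so $\mathbf{P}_0(\sup_{0\leq s\leq t}|X(s)|\geq r)\leq 2\,\mathbf{P}_0(|X(t)|\geq r)$, which gives the upper bound in one line; but this is a simplification, not a correction, and your version as written is complete and matches the cited estimate.
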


The term `overwhelming probability' is used with a precise meaning throughout the paper, which is given as follows.
\begin{definition}[Overwhelming probability]
Let $(A_t)_{t>0}$ be a family of events indexed by time $t$. We say that $A_t$ occurs \emph{with overwhelming probability} as $t\rightarrow\infty$ if there is a constant $c>0$ and time $t_0$ such that 
$$P(A_t^c)\leq e^{-ct} \quad \text{for all} \quad t\geq t_0. $$ 
\end{definition}

We now state and prove Lemma~\ref{lemma3}.

\begin{lemma}\label{lemma3} 
Let $0\leq\theta<1$, $B$ be a fixed ball in $\mathbb{R}^d$, and $\textbf{e}$ be the unit vector in the direction of the center of $B$ if $B$ is not centered at the origin; otherwise let $\textbf{e}$ be any unit vector. For $t\geq 0$ define $B_t=B+\theta\sqrt{2\beta}t\textbf{e}$. Then, for each $0\leq a<1-\theta^2$, there exists a constant $c=c(\beta,\theta,a)>0$ such that 
\begin{equation} \underset{t\rightarrow\infty}{\limsup}\:\frac{1}{t}\log P\left(Z_t(B_t)< e^{\beta at}\right)\leq-c. \nonumber
\end{equation}
\end{lemma}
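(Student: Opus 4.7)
The plan is a time-splitting argument. For a small $\rho \in (0, 1-\theta)$ to be chosen in terms of $a$ and $\theta$, I partition $[0,t]$ as $[0,\rho t]\cup[\rho t,t]$, condition on the BBM's configuration at time $\rho t$, and show that the independent sub-BBMs rooted at the particles alive at time $\rho t$ collectively deposit at least $e^{\beta a t}$ particles in $B_t$ with overwhelming probability.

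In the first stage, I would establish a good event $G_{\rho t}$ combining two ingredients: by Proposition~\ref{proposition1}, $P(N_{\rho t}\le e^{\beta\rho t/2})=O(e^{-\beta\rho t/2})$; and by Proposition~\ref{proposition3} together with a union bound over ancestral paths (using $E[N_{\rho t}]=e^{\beta\rho t}$ from Proposition~\ref{proposition2}), every particle at time $\rho t$ lies in $B(0,R\rho t)$ for some $R>\sqrt{2\beta}$ with probability $1-e^{-c_1 t}$. The intersection $G_{\rho t}$ then has probability $\ge 1-e^{-c_2 t}$.

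In the second stage, on $G_{\rho t}$ each of the $\ge e^{\beta\rho t/2}$ sub-BBMs evolves independently over $[\rho t,t]$ toward the fixed target $B_t$. For a sub-BBM starting at $x$ with $|x|\le R\rho t$, the distance from $x$ to the center of $B_t$ is at most $(\theta\sqrt{2\beta}+R\rho)t$, and by picking $\rho$ small enough that $\theta':=(\theta+R\rho/\sqrt{2\beta})/(1-\rho)<1$ I place $B_t$ in the subcritical zone for each sub-BBM. The Biggins SLLN \eqref{biggins} applied to each sub-BBM, combined with a second-moment (Paley-Zygmund) lower bound, then shows that each sub-BBM contributes at least $e^{\beta b(1-\rho)t}$ particles to $B_t$ with probability $\ge \eta>0$ uniformly in $x$, where $b$ may be taken arbitrarily close to $1-(\theta')^2$. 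A Chernoff bound applied to the number of successful sub-BBMs yields $Z_t(B_t)\ge (\eta/2)\,e^{\beta\rho t/2}\cdot e^{\beta b(1-\rho)t}$ on $G_{\rho t}$ with super-exponentially high conditional probability.

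The proof closes once $\rho/2 + b(1-\rho) > a$; since this quantity tends to $1-\theta^2$ as $\rho\downarrow 0$, the hypothesis $a<1-\theta^2$ guarantees a valid choice of $\rho$. The main obstacle will be securing the uniform-in-$x$ lower bound $\eta>0$ on each sub-BBM's success probability: the second-moment ratio $E[(Z^{\mathrm{sub}})^2]/(E[Z^{\mathrm{sub}}])^2$ for BBM local mass in a subcritical target need not be uniformly bounded when $\theta'$ is close to $1$, so for such regimes I would need an additional intermediate time split that reduces matters to the well-behaved case, or a uniform form of the Biggins SLLN over the compact parameter set $\theta'\in[0,\theta'_{\max}]$ with $\theta'_{\max}<1$.
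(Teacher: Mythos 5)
Your overall architecture --- split $[0,t]$ at $\rho t$, produce many particles kept within a linear distance of the origin, then let the independent sub-BBMs rooted at time $\rho t$ deposit mass in $B_t$ --- is the same as the paper's, and your first stage is fine. The gap is in the step you yourself flag, and as written it is fatal: the uniform-in-$x$ bound $\eta>0$ from Paley--Zygmund. For a single BBM run for time $s$ and a ball of fixed radius at distance $\theta'\sqrt{2\beta}s$ from its starting point, the standard two-particle computation gives $E[(Z^{\mathrm{sub}})^2]/(E[Z^{\mathrm{sub}}])^2=\exp[\beta(\sqrt{2}\theta'-1)^2 s+o(s)]$ as soon as $\theta'>1/\sqrt{2}$ (the dominant pairs branch at a time of order $s$, not $o(s)$). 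So Paley--Zygmund yields only $\eta=\exp[-\beta(\sqrt{2}\theta'-1)^2(1-\rho)t+o(t)]$, and with your supply of $e^{\beta\rho t/2}$ independent trials and $\rho$ chosen small (which you need so that $\theta'$ stays near $\theta$ and $b(1-\rho)>a$), the expected number of successes $\eta\, e^{\beta\rho t/2}$ tends to zero and the Chernoff step collapses; taking $\rho$ large instead pushes $\theta'$ up and destroys the requirement $b(1-\rho)>a$ when $a$ is near $1-\theta^2$. This is not a boundary pathology `close to $1$': it affects every $\theta\geq 1/\sqrt{2}$, and the lemma must cover all $\theta<1$. Neither proposed remedy is carried out: an extra time split does not change the relative geometry, since without conditioning the sub-BBM roots have no drift toward $B_t$; and a `uniform Biggins SLLN' is precisely the missing ingredient, not a statement you may cite in the form you need.

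For comparison, the paper closes exactly this point with no second-moment input: it invokes \eqref{biggins} directly. Almost sure convergence gives convergence in probability, so for any $\varepsilon>0$ and all large $t$, each sub-BBM started in $B(0,y\sqrt{2\beta}t)$ fails to contribute $\exp[(1-\varepsilon)\beta t((1-\delta)-(\theta+y)^2/(1-\delta))]$ particles to $B_t$ with probability at most $\varepsilon$, uniformly over starting points because the worst-case distance to $B_t$ is $(\theta+y)\sqrt{2\beta}t+o(t)$; choosing $\varepsilon,\delta,y$ with $(1-\varepsilon)\left((1-\delta)-(\theta+y)^2/(1-\delta)\right)>a$ and using independence of only $\lceil t^2\rceil$ such sub-BBMs makes the conditional failure probability at most $\varepsilon^{t^2}$, which is SES --- so polynomially many particles suffice and no Chernoff bound or exponential particle count is needed. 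If you want to keep a quantitative route, you would have to replace Paley--Zygmund by a truncated second moment with a barrier on ancestral paths, valid for all $\theta'<1$, which is substantially more work than this lemma requires.
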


\begin{proof}
The strategy is to split the time interval $[0,t]$ into two pieces so that with overwhelming probability, sufficiently many particles are created and kept close enough to the origin in the first piece, and then in the second piece at least one of these particles initiates a sub-BBM that contributes at least $\exp(\beta at)$ particles to $B_t$ at time $t$.

To start the proof, split the time interval $[0,t]$ into two pieces as $[0,\delta t]$ and $[\delta t,t]$, where $0<\delta<1$ is a number which will later depend on $\theta$ and $a$. 

\underline{Step 1}: For $0\leq a<1-\theta^2$ and $t>0$, let  
$$ A_t:=\left\{Z_t(B_t)< e^{\beta at}\right\}.$$
Next, for $y>0$ and $t>0$, let $y(t)=y\sqrt{2\beta}t$, and define the events
$$ E_t:=\left\{N_{\delta t}\geq t^2\right\}, \quad F_t:=\left\{Z_{\delta t}\left(B(0,y(t))\right)\geq t^2\right\}.$$
Estimate
\begin{equation} 
P(A_t)\leq  P(A_t \mid F_t)+P(F_t^c \mid E_t)+P(E_t^c). 
\label{eq113}
\end{equation}
Using \eqref{prop1}, we bound $P(E_t^c)$ from above as 
\begin{equation} P(E_t^c)\leq \exp[-\beta \delta t+o(t)]. \label{eq114}
\end{equation}
It is clear that for an upper bound on $P(F_t^c \mid E_t)$, we may assume that the BBM has exactly $\left\lceil t^2\right\rceil$ particles at time $\delta t$. Therefore, an upper bound for $P(F_t^c \mid E_t)$ is given by the probability that at least one among the $\left\lceil t^2\right\rceil$ particles present at time $\delta t$ has escaped $B(0,y(t))$ during $[0,\delta t]$. Since motion and branching mechanisms are independent; even conditioned on $E_t$, the position of each particle present at time $\delta t$ is identically distributed as $X(\delta t)$, where $X=(X(s))_{s\geq 0}$ represents the standard $d$-dimensional Brownian motion. It follows from the union bound and Proposition~\ref{proposition3} that 
\begin{equation} P(F_t^c \mid E_t)\leq \left\lceil t^2\right\rceil\exp\left[-\beta\frac{y^2}{\delta}t+o(t)\right]=\exp\left[-\beta\frac{y^2}{\delta}t+o(t)\right]. \label{eq115}
\end{equation}
On the event $F_t$, there are at least $\left\lceil t^2\right\rceil$ particles inside $B(0,y(t))$ at time $\delta t$, and by triangle inequality each of them is at most at a distance of
$$(\theta+y)\sqrt{2\beta}t+o(t)=:r(t) $$
from $B_t$. Here, the distance between a Borel set $B$ and a point $y\in\mathbb{R}^d$ is defined as $\underset{x\in B}{\inf}|y-x|$.

\underline{Step 2}: Apply the branching Markov property at time $\delta t$. By \eqref{biggins}, for any $\varepsilon>0$, with probability at least $1-\varepsilon$, the sub-BBM initiated by a particle in $B(0,y(t))$ at time $\delta t$, independently of all others, contributes at least 
$$\exp\left[(1-\varepsilon)\beta t\left((1-\delta)-\frac{(\theta+y)^2}{1-\delta}\right)\right]$$ particles to $B_t$ at time $t$. To be precise, let $S(\delta t)=\left\{X_1,X_2,\ldots,X_{N_{\delta t}}\right\}$ denote the positions of particles present at time $\delta t$, where we suppress the dependence of $X_j$ on $t$ for brevity. Abbreviate 
$$m_t:=\left\lceil t^2\right\rceil.$$ 
Conditioned on the event $F_t$, we may and do order the elements in $S(\delta t)$ so that $|X_j|<y(t)$ for all $1\leq j \leq m_t$. Let $Z^j$ denote the BBM starting at $X_j$, running over a time $(1-\delta)t$. Then, \eqref{biggins} implies that for any $\varepsilon>0$, for all large $t$,
\begin{equation} P_{X_j}\left(Z^j_{(1-\delta)t}(B_t)< \exp\left[(1-\varepsilon)\beta t\left((1-\delta)-\frac{(\theta+y)^2}{1-\delta}\right)\right]\right)\leq \varepsilon \nonumber
\end{equation}
uniformly over $j=1,\ldots,m_t$. Choose $\varepsilon$, $\delta$ and $y$ small enough to satisfy
\begin{equation} (1-\varepsilon)\left((1-\delta)-\frac{(\theta+y)^2}{1-\delta}\right)>a. \label{cond2}
\end{equation}
Then, for $A_t$ to occur conditional on $F_t$, it is necessary that each of the at least $m_t$ many particles present in $B(0,y(t))$ at time $\delta t$ should contribute less than 
$\exp\left[(1-\varepsilon)\beta t\left((1-\delta)-\frac{(\theta+y)^2}{1-\delta}\right)\right]$  
particles to $B_t$ at time $t$. By independence of particles, this has probability bounded above by 
$$ \varepsilon^{t^2}=\exp[-\log(1/e)t^2], $$
which is SES in $t$. Along with \eqref{eq113}-\eqref{eq115}, this implies that there exists $c=c(\beta,\theta,a)>0$ and $t_0$ such that for all $t\geq t_0$,
$$P(A_t)\leq e^{-ct}.$$  
We may choose $\varepsilon$, $\delta$ and $y$ all small enough so as to satisfy the condition in $\eqref{cond2}$ since $a<1-\theta^2$; take for instance $\varepsilon=\delta=y=(1-\theta^2-a)/2$. This completes the proof. 
\end{proof}

The proof of the upper bound of \eqref{ld0} in Theorem~\ref{theorem1} uses a bootstrap argument given in two steps. The first step is completed by Lemma~\ref{lemma3} above, and the second step will be given below in the next section. In this respect, Lemma~\ref{lemma3} completes the preparation needed for the proof of Theorem~\ref{theorem1}.

\section{Proof of Theorem~\ref{theorem1}} \label{section4}
This section is devoted to the proof of Theorem~\ref{theorem1}. For the lower bound, we simply find a strategy that realizes the desired event with optimal probability on an exponential scale. The proof of the upper bound is more technical, where we effectively show that the strategy that gave the lower bound is indeed optimal, that is, there is no better strategy that realizes the desired event at a smaller exponential cost of probability. The proof of the upper bound that is given below uses a method similar to that of \cite[Thm.1]{OCE2017}, and can be viewed as the second step of the bootstrap argument, whose first step was completed by Lemma~\ref{lemma3}.  
  
\subsection{Proof of the lower bound}
The lower bound arises from the following strategy which gives a mass of less than $\exp(\beta at)$ to $B_t$ at time $t$. Define
\begin{equation}
\bar\rho=\bar\rho(\theta,a)=1-a/2-\sqrt{(a/2)^2+\theta^2}, \label{barrho}
\end{equation}
which is chosen so that $(1-\bar\rho)^2-a(1-\bar\rho)=\theta^2$.
Let $0<\rho\leq \bar\rho$ and $\varepsilon>0$. First, in the time interval $[0,\rho t]$, suppress branching completely and move the single Brownian particle to a distance of 
$$d(t):=\left(\sqrt{(1-\rho)^2-a(1-\rho)}-\theta+\varepsilon\right)\sqrt{2\beta}t+o(t)$$
from the origin in the opposite direction of $\textbf{e}$, where $\textbf{e}$ is the direction of motion of $(B_t)_{t\geq 0}$. By Proposition~\ref{proposition3}, this partial strategy has probability
\begin{align}& \:\:\:\exp(-\beta\rho t)\exp\left[-\frac{((\sqrt{(1-\rho)^2-a(1-\rho)}-\theta+\varepsilon)\sqrt{2\beta})^2}{2\rho}t+o(t)\right] \nonumber \\
&= \exp\left[-\beta\left(\rho+\frac{(\sqrt{(1-\rho)^2-a(1-\rho)}-\theta+\varepsilon)^2}{\rho}\right) t+o(t)\right] \label{strategy}
\end{align}
where the first exponential factor comes from suppressing the branching, and the second from the linear Brownian displacement. Next, in the remaining interval $[\rho t,t]$, prevent the BBM from sending at least $\exp(\beta at)$ particles to $B_t$ at time $t$. This comes for `free' and has probability $\exp[o(t)]$ since it is realized when the BBM behaves `normally.' Indeed, the distance between $B_t$ and the position of the single particle at time $\delta t$ is
$$d(t)+\theta \sqrt{2\beta}t+o(t)=\left(\sqrt{(1-\rho)^2-a(1-\rho)}+\varepsilon\right)\sqrt{2\beta}t+o(t).$$
This distance is too large for the sub-BBM emanating from the particle at time $\delta t$ to contribute a mass of $\exp(\beta at)$ to $B_t$ at time $t$ due to \eqref{biggins} since 
$$(1-\rho)-\frac{\left(\sqrt{(1-\rho)^2-a(1-\rho)}+\varepsilon\right)^2}{1-\rho}<a.$$
(For $a=0=\theta$, we have $\bar\rho=1$. Therefore, take $\rho<\bar\rho$.) Then, applying the Markov property at time $\delta t$, using \eqref{strategy}, and letting $\varepsilon\rightarrow 0$ yields
\begin{equation} \underset{t\rightarrow\infty}{\liminf}\:\frac{1}{t}\log P\left(Z_t(B_t)< e^{\beta at}\right)\geq -\beta\left[\rho+\frac{(\sqrt{(1-\rho)^2-a(1-\rho)}-\theta)^2}{\rho}\right]. \label{eq20}
\end{equation}  
Finally, optimize the right-hand side of \eqref{eq20} over $\rho\in(0,\bar\rho]$; equivalently, minimize 
$$f_{\theta,a}(\rho):=\rho+\frac{(\sqrt{(1-\rho)^2-a(1-\rho)}-\theta)^2}{\rho}$$
over $\rho\in(0,\bar\rho]$, which completes the proof of the lower bound.
(If $\bar\rho<\rho\leq 1$, then there is no need to move the initial particle to a linear distance from the origin; suppressing the branching alone over $[0,\rho t]$ and letting the BBM behave normally over $[\rho t,t]$ realizes the event $\{Z_t(B_t)< e^{\beta at}\}$. However, this strategy cannot be optimal, because it has probability $\exp(-\beta \rho t)$ with $\rho>\bar\rho$, which is smaller than $\exp[-\beta(\min_{\rho\in(0,\bar\rho]}f_{\theta,a}(\rho)) t]$ on a logarithmic scale.)

\smallskip

\subsection{Proof of the upper bound}

A bootstrap argument is given for the proof of the upper bound, where the central ingredient is Lemma~\ref{lemma3}. Namely, we have already shown in Lemma~\ref{lemma3} that for large $t$ the probability of exponentially few particles in $B_t$ at time $t$ decays at least as fast as $\exp(-ct)$ for some $c=c(\beta,\theta,a)>0$. Here, using this, we show that the decay constant $c(\beta,\theta,a)$ can actually be improved to $\beta \times I(\theta,a)$, where $I$ is given in \eqref{ld}.  

We split the time interval $[0,t]$ into two pieces at $\rho t$, $\rho \in [0,1]$, which is the instant at which the total mass exceeds $\lfloor t \rfloor$. In the first piece, the branching is partially suppressed to give polynomially many particles only, which has an exponential probabilistic cost; whereas we are able to keep all of these particles close enough to the origin (at sublinear distance) at no cost since there are not exponentially many of them. In the second piece, given that we now have $\lfloor t \rfloor$ particles close enough to the origin, we argue that with overwhelming probability, at least one of these particles initiates a sub-BBM, that evolves in $[\rho t,t]$ and eventually sends at least $\exp(\beta at)$ particles to $B_t$ at time $t$. To catch the optimal $\rho$, we discretize $[0,t]$ into many small pieces, and condition the process on in which piece $\rho t$ falls, which results in a sum of terms, of which only the largest contributes on an exponential scale. 

Recall that $N_t=Z_t(\mathbb{R}^d)$, and for $t>1$ define the random variable
\begin{equation} \rho_t=\sup\left\{\rho \in [0,1]:N_{\rho t}\leq \lfloor t\rfloor \right\}. \nonumber
\end{equation}
Observe that for $x\in[0,1]$, we have $\{\rho_t\geq x \}\subseteq \{N_{xt}\leq \lfloor t\rfloor+1 \}$. We start by conditioning on $\rho_t$. For $t>0$ define the event
$$A_t:=\{ Z_t(B_t)< e^{\beta at} \},$$
and recall the definition of $\bar\rho$ from \eqref{barrho}. Then, for every $n=1,2,3,\ldots$
\begin{align} P(A_t)&\:=\sum_{i=0}^{\lfloor \bar\rho n-1\rfloor-1} P\left(A_t\cap \left\{\frac{i}{n}\leq \rho_t <\frac{i+1}{n}\right\} \right)+ P\left(A_t\cap \left\{\rho_t\geq \frac{\lfloor \bar\rho n-1\rfloor}{n}\right\} \right) \nonumber \\
&\:\:\leq \sum_{i=0}^{\lfloor \bar\rho n-1\rfloor-1} \exp\left[-\beta \frac{i}{n}t+o(t)\right]P^{(i,n)}_t\left(A_t\right)+\exp\left[-\beta \left(\bar\rho-\frac{2}{n}\right) t+o(t)\right], \label{eq21}
\end{align}
where we use \eqref{prop1}, which yields $P(N_{(i/n)t}\leq \lfloor t\rfloor+1)=\exp[-\beta(i/n)t+o(t)]$, to control \newline
$P(\frac{i}{n}\leq \rho_t <\frac{i+1}{n})$, and introduce the conditional probabilities
\begin{equation} P^{(i,n)}_t(\cdot)=P\left(\ \cdot \ \middle| \ \frac{i}{n}\leq \rho_t <\frac{i+1}{n}\right), \quad i=0,1,\ldots,\lfloor \bar\rho n-1\rfloor-1. \nonumber
\end{equation}

For each pair $(i,n)$ with $n=1,2,3,\ldots$ and $i=0,1,\ldots,\lfloor \bar\rho n-1\rfloor-1$, define the interval 
$$I^{(i,n)}:=[i/n,(i+1)/n),$$
and the radius
\begin{equation}
r_t^{(i,n)}:=\sqrt{2\beta}\left(\sqrt{\left(1-\frac{i+1}{n}\right)^2-a\left(1-\frac{i+1}{n}\right)}-\theta-\varepsilon\right)t.\label{eqradius}
\end{equation}
By definition of $\rho_t$, conditional on the event $\rho_t\in I^{(i,n)}$, there exists an instant in $[ti/n,t(i+1)/n)$, namely $\rho_t t$, at which there are exactly $\lfloor t\rfloor+1$ particles in the system. Let $\varepsilon=\varepsilon(n)>0$ be small enough so that \eqref{eqradius} is positive for each $i=0,1,\ldots,\lfloor \bar\rho n-1\rfloor-1$, and let $E_t^{(i,n)}$ be the event that among the $\lfloor t\rfloor+1$ particles alive at $\rho_t t$, there is at least one outside $B_t^{(i,n)}:=B\left(0,r_t^{(i,n)}\right)$. Estimate
\begin{equation} P^{(i,n)}_t\left(A_t\right)\leq P^{(i,n)}_t\left(E_t^{(i,n)}\right)+P^{(i,n)}_t\left(A_t \mid [E_t^{(i,n)}]^c\right). \label{eq22}
\end{equation}
On the event $[E_t^{(i,n)}]^c$, there are $\lfloor t\rfloor+1$ particles in $B_t^{(i,n)}$ at time $\rho_t t$. Then, conditional on $\rho_t\in I^{(i,n)}$, Lemma~\ref{lemma3} and \eqref{eqradius} imply that with overwhelming probability, the sub-BBM emanating from each such particle at time $\rho_t t$ evolves in the remaining time of length at least $(1-(i+1)/n)t$ to contribute at least $\exp(\beta at)$ particles to $B_t$ at time $t$. This is because
\begin{equation}
a<\left(1-\frac{i+1}{n}\right)-\frac{\left(\sqrt{\left(1-\frac{i+1}{n}\right)^2-a\left(1-\frac{i+1}{n}\right)}-\varepsilon\right)^2}{1-\frac{i+1}{n}}, \label{eq25}
\end{equation}
that is, the distance between $B_t$ and the starting point of the sub-BBM is too short for the sub-BBM to send less than $\exp(\beta at)$ particles to $B_t$ in the remaining time, which is at least $(1-(i+1)/n)t$.  
More precisely, let $p_t^y$ be the probability that a BBM starting with a single particle at position $y\in\mathbb{R}^d$ contributes less than $\exp(\beta at)$ particles to $B_t$ at time $t$. Then since $\rho_t t<t(i+1)/n$ conditioned on $\rho_t\in I^{(i,n)}$, by Lemma~\ref{lemma3} and \eqref{eq25}, uniformly over $y\in B_t^{(i,n)}$ there exists a constant $c>0$ and $t_0$ such that
$$p_{t(1-\rho_t)}^y \leq e^{-ct} \quad \text{for all} \quad t\geq t_0. $$   
Hence, by the strong Markov property applied at time $\rho_t t$ and the independence of particles present at that time, for all large $t$ we have
\begin{equation} P^{(i,n)}_t\left(A_t \mid [E_t^{(i,n)}]^c\right)\leq \left(e^{-ct}\right)^{\lfloor t\rfloor+1}\leq e^{-ct^2}, \label{kedibaba}
\end{equation}
which is SES in $t$.
Now consider the first term on the right-hand side of (\ref{eq22}). Recall that if the event $E_t^{(i,n)}$ occurs, then at least one among $\lfloor t \rfloor+1$ many particles has escaped $B_t^{(i,n)}$ by time at most $t(i+1)/n$. Note that each particle alive at time $s$ is at a random point, whose spatial distribution is identical to that of $X(s)$, where $X=(X(t))_{t\geq 0}$ denotes the standard Brownian motion in $d$ dimensions. Therefore, by Proposition~\ref{proposition3} and the union bound, we have
\begin{equation} P^{(i,n)}_t\left(E_t^{(i,n)}\right)\leq \: (\lfloor t \rfloor+1)\,\exp\left[-\frac{\left(r_t^{(i,n)}\right)^2}{2(i+1)/n}t+o(t)\right]
= \: \exp\left[-\frac{\left(r_t^{(i,n)}\right)^2}{2(i+1)/n}t+o(t)\right] .  \label{eq23}
\end{equation}  
From \eqref{eqradius}, \eqref{eq22}, \eqref{kedibaba} and \eqref{eq23}, we obtain
\begin{equation} P^{(i,n)}_t\left(A_t\right) \leq \exp\left[-\beta\,\frac{\left(\sqrt{\left(1-\frac{i+1}{n}\right)^2-a\left(1-\frac{i+1}{n}\right)}-\theta-\varepsilon\right)^2}{(i+1)/n}\,t+o(t)\right]+e^{-ct^2}. \label{eq24}
\end{equation}
Substituting \eqref{eq24} into \eqref{eq21}, and optimizing over $i\in\left\{0,1,\ldots,\lfloor \bar\rho n-1\rfloor-1\right\}$ gives
\begin{align} &\underset{t\rightarrow \infty}{\limsup}\:\frac{1}{t}\log P\left(A_t\right) 
\leq \nonumber \\
& -\beta\left[\underset{i\in\left\{0,1,\ldots,\lfloor \bar\rho n-1\rfloor-1\right\}}{\min}
\left\{\frac{i}{n}+\frac{\left(\sqrt{\left(1-\frac{i+1}{n}\right)^2-a\left(1-\frac{i+1}{n}\right)}-\theta-\varepsilon\right)^2}{(i+1)/n}\right\}\wedge\left(\bar\rho-\frac{2}{n}\right)\right], \label{big}
\end{align}
where we use $a\wedge b$ to denote the minimum of $a$ and $b$. Now first let $\varepsilon \rightarrow 0$, then set $\rho=i/n$, let $n\rightarrow \infty$, and use the continuity of the functional form from which the minimum is taken to obtain
\begin{equation} \underset{t\rightarrow\infty}{\limsup}\:\frac{1}{t}\log P\left(A_t\right)\leq -\beta\underset{\rho\in(0,\bar\rho]}{\inf}\left[\rho+\frac{\sqrt{(1-\rho)^2-a(1-\rho)}-\theta}{\rho}\right].  \label{big2}
\end{equation}
(Note that we have not written the last term on the right-hand side of \eqref{big} explicitly in \eqref{big2}, because once $n\rightarrow \infty$, this term becomes $\bar\rho$, which is attained by the function inside the infimum on the right-hand side of \eqref{big2} if we set $\rho=\bar\rho$.)

\section{Analysis of the optimization problem} \label{section5}
Here, we analyze the optimization problem given in the statement of Theorem~\ref{theorem1}:
\begin{equation}
I(\theta,a)=\underset{\rho\in(0,\bar\rho]}{\inf}\left[\rho+\frac{\left(\sqrt{(1-\rho)^2-a(1-\rho)}-\theta\right)^2}{\rho}\right]. \label{I}
\end{equation}
Let $\theta$ and $a$ be such that $0\leq a<1-\theta^2\leq 1$, and for $\rho\in(0,1-a]$ define the function $f_{\theta,a}$ by 
\begin{equation}
f_{\theta,a}(\rho)=\rho+\left(\sqrt{(1-\rho)^2-a(1-\rho)}-\theta\right)^2/\rho \label{f} 
\end{equation}
so that
$$I(\theta,a)=\underset{\rho\in(0,\bar\rho]}{\inf}f_{\theta,a}(\rho).$$
Here we consider $f$ on the larger interval $(0,1-a]$ as opposed to $(0,\bar\rho]$. This is harmless, because for each pair $(\theta,a)$, the minimizer of \eqref{f} will be shown to exist, be unique over $\rho\in(0,1-a]$, and be at most $\bar\rho$. For each pair $(\theta,a)$, denote the minimizer by $\hat\rho=\hat\rho(\theta,a)$. (For notational convenience, when $\theta$ or $a$ is fixed in the analysis below, we suppress the dependence of $f$ and $\hat\rho$ on that parameter in the notation. We also occasionally write simply $f$ and $\hat\rho$, and suppress their dependence on both $\theta$ and $a$.) The optimization problem in \eqref{I} can be solved to find closed-form formulas for $\hat\rho$ and $f(\hat\rho)$ when $\theta=0$ and when $a=0$; these were found in Corollary~\ref{corollary1} and Corollary~\ref{corollary2}, respectively. Therefore, here we suppose that 
$$0<a<1-\theta^2<1.$$

Observe that for each pair $(\theta,a)$, $f$ is differentiable on $(0,1-a)$, and its derivative, which we denote by $f'$, has the following rule:
\begin{align} f_{\theta,a}'(\rho)=&\: \frac{1}{\rho^2}\left[(1-\theta^2-a)-2(1-a-\rho^2)+\theta\,\frac{(1-\rho)(1-a-\rho)+(1-a-\rho^2)}{\sqrt{(1-\rho)(1-a-\rho)}}\right]  \label{derivative} \\
=&\: \frac{1}{\rho^2}\left[2\rho^2-1+a-\theta^2+\theta\,\frac{2(1-a-\rho)+a\rho}{\sqrt{(1-\rho)^2-a(1-\rho)}}\right]. \label{derivative2}
\end{align} 

Recall the definition 
$$\bar{\rho}=\bar{\rho}(\theta,a)=1-a/2-\sqrt{(a/2)^2+\theta^2},$$
which is obtained by choosing $\rho$ so as to kill the second term on the right-hand side of \eqref{f}.\footnote{$\rho=\bar\rho$ corresponds to the strategy that the initial particle is not moved to a linear distance in the interval $[0,\rho t]$; in this case, the only exponential probability cost comes from suppressing the branching over $[0,\rho t]$.} 
The minimization problem of $f$ satisfies the following properties. 

\begin{proposition}\label{prop5} For each $\theta,a\in\mathbb{R}$ satisfying $0<a<1-\theta^2<1$, let $f_{\theta,a}:(0,1-a]\to\mathbb{R}_+$ be the function defined in \eqref{f}.       
\begin{itemize}
	\item[(i)] The function $f_{\theta,a}$ is strictly convex, and has a unique minimizer on $(0,1-a)$. Denote this minimizer by $\hat{\rho}=\hat{\rho}(\theta,a)$. Then, $\hat{\rho}$ satisfies $0<\hat\rho<\bar\rho$.
	\item[(ii)] For fixed $\theta\in(0,1)$, both $\hat\rho$ and $f(\hat\rho)$ are strictly decreasing over $a\in[0,1-\theta^2)$.
	\item[(iii)] For fixed $\theta\in(0,1)$, 
	\begin{align}
&\quad \underset{a\rightarrow 0^+}{\lim}\hat{\rho}(a)=\hat\rho(0)=\frac{1-\theta}{\sqrt{2}}, \quad \quad
\underset{a\rightarrow 0^+}{\lim}f_a(\hat\rho(a))=f_0(\hat\rho(0))=2(\sqrt{2}-1)(1-\theta),\label{eq27} \\
&\underset{a\rightarrow (1-\theta^2)^-}{\lim}\hat{\rho}(a)=0, \quad \quad \quad \quad \quad
\underset{a\rightarrow (1-\theta^2)^-}{\lim}f_a(\hat\rho(a))=0.  \label{eq28}
	\end{align}
	\item[(iv)] For fixed $a\in(0,1)$, both $\hat\rho$ and $f(\hat\rho)$ are strictly decreasing over $\theta\in[0,\sqrt{1-a})$.
	\item[(v)] For fixed $a\in(0,1)$, 
	\begin{align}
&\quad\underset{\theta\rightarrow 0^+}{\lim}\hat{\rho}(\theta)=\hat\rho(0)=\sqrt{\frac{1-a}{2}},\quad \quad 
\underset{\theta\rightarrow 0^+}{\lim}f_\theta(\hat\rho(\theta))=f_0(\hat\rho(0))=2\sqrt{2(1-a)}-(2-a),\label{eq29}  \\
&\underset{\theta\rightarrow\sqrt{1-a}^-}{\lim}\hat{\rho}(\theta)=0, \quad \quad \quad \quad \quad \quad \:\:
\underset{\theta\rightarrow\sqrt{1-a}^-}{\lim}f_\theta(\hat\rho(\theta))=0.  \label{eq26}
	\end{align}
\end{itemize}
\end{proposition}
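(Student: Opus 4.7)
The plan rests on the decomposition
\[
f_{\theta,a}(\rho) = 2\rho - (2-a) + \frac{\phi(\rho)}{\rho}, \qquad \phi(\rho) := 1 - a + \theta^2 - 2\theta\, g(\rho),
\]
where $g(\rho):=\sqrt{(1-\rho)(1-a-\rho)}$, together with the identity $g''(\rho) = -a^2/(4 g^3)$ obtained by twice implicitly differentiating $g^2 = (1-\rho)(1-a-\rho)$.

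For part (i), since $2\rho-(2-a)$ is affine, strict convexity reduces to that of $\phi/\rho$. A direct computation gives $(\phi/\rho)''(\rho) = \psi(\rho)/\rho^3$ with $\psi(\rho):=\phi''(\rho)\rho^2 - 2\phi'(\rho)\rho + 2\phi(\rho)$, and -- crucially -- the first-order pieces telescope to leave $\psi'(\rho) = \phi'''(\rho)\,\rho^2$. Since $\phi''(\rho) = -2\theta g''(\rho) = \theta a^2/(2 g^3)$, one more differentiation yields $\phi'''(\rho) = -3\theta a^2\, g'(\rho)/(2g^4)>0$ because $g'(\rho) = (2\rho-(2-a))/(2g)<0$ on $(0,1-a)$. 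Hence $\psi$ strictly increases from $\psi(0)=2\phi(0)=2(\sqrt{1-a}-\theta)^2>0$, so $\psi>0$ throughout and $f$ is strictly convex. Uniqueness of the minimizer follows automatically. Existence of $\hat\rho\in(0,1-a)$ comes from the boundary behavior of (\ref{derivative2}): $\rho^2 f'(\rho)\to -(\sqrt{1-a}-\theta)^2<0$ as $\rho\to 0^+$, while $\rho^2 f'(\rho)\to+\infty$ as $\rho\to(1-a)^-$ since $g\to 0^+$ and $\theta N(\rho)\to\theta a(1-a)>0$. Finally, evaluating (\ref{derivative2}) at $\bar\rho$ with $g(\bar\rho)=\theta$ and using the defining relation $\bar\rho^2-(2-a)\bar\rho+(1-a-\theta^2)=0$ collapses the bracket to $\bar\rho^2$, giving the clean value $f'(\bar\rho)=1>0$; strict convexity of $f$ then forces $\hat\rho<\bar\rho$.

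For parts (ii) and (iv), the envelope theorem (since $f'(\hat\rho)=0$) yields
\[
\tfrac{d}{da}f(\hat\rho(a)) = \partial_a f(\hat\rho) = -\frac{(g(\hat\rho)-\theta)(1-\hat\rho)}{g(\hat\rho)\,\hat\rho},\qquad \tfrac{d}{d\theta}f(\hat\rho(\theta)) = \partial_\theta f(\hat\rho) = -\frac{2(g(\hat\rho)-\theta)}{\hat\rho},
\]
both strictly negative since $\hat\rho<\bar\rho$ forces $g(\hat\rho)>\theta$. For the monotonicity of $\hat\rho$ itself, the implicit function theorem (applicable because $f''(\hat\rho)>0$) gives $\hat\rho'(a)=-\partial_a f'(\hat\rho)/f''(\hat\rho)$ and analogously in $\theta$, so the sign reduces to $\partial_a f'(\hat\rho)$ and $\partial_\theta f'(\hat\rho)$. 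Differentiating (\ref{derivative2}) in $a$ (resp.\ $\theta$), clearing denominators via the relation $f'(\hat\rho)=0$ that characterizes $\hat\rho$, and using $g(\hat\rho)>\theta$ shows both partials are strictly positive, whence $\hat\rho$ is strictly decreasing in each parameter separately.

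For parts (iii) and (v), the IFT also gives continuity of $\hat\rho(\cdot)$ and $f(\hat\rho(\cdot))$ in each parameter up to the interior boundary, reducing the interior limits to direct evaluation at $a=0$ or $\theta=0$ -- precisely the cases already solved in Corollary~\ref{corollary2} and Corollary~\ref{corollary1}, from which the stated closed forms $\hat\rho(0)=(1-\theta)/\sqrt 2$, $f_0(\hat\rho(0))=2(\sqrt 2-1)(1-\theta)$ and $\hat\rho(0)=\sqrt{(1-a)/2}$, $f_0(\hat\rho(0))=2\sqrt{2(1-a)}-(2-a)$ can be read off. The boundary limits $a\to(1-\theta^2)^-$ and $\theta\to\sqrt{1-a}^-$ are handled uniformly: the explicit formula for $\bar\rho$ shows $\bar\rho\to 0$ in both cases, and since $0<\hat\rho<\bar\rho$ this forces $\hat\rho\to 0$. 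The sandwich $0<f(\hat\rho)\le f(\bar\rho)=\bar\rho$ (the second summand of $f$ vanishes at $\bar\rho$ because $g(\bar\rho)=\theta$) then yields $f(\hat\rho)\to 0$.

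The main obstacle is identifying the clean route to strict convexity in (i): the naive expansion of $f''$ leads to a sign analysis complicated by the zero of $(g-\theta)^2$ at $\bar\rho$, whereas writing $f=2\rho-(2-a)+\phi/\rho$ and exploiting the cancellation $\psi'=\phi'''\rho^2$ reduces everything to the sign of $g'''$, which is manifestly negative. The differentiations needed for $\partial_a f'$ and $\partial_\theta f'$ in parts (ii) and (iv) are conceptually routine but algebraically tedious, and require the substitution $f'(\hat\rho)=0$ before the positivity becomes visible.
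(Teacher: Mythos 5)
Most of your argument is sound and in places cleaner than the paper's: the decomposition $f_{\theta,a}(\rho)=2\rho-(2-a)+\phi(\rho)/\rho$ with the cancellation $\psi'=\phi'''\rho^2$ gives an explicit proof of $f''>0$, which the paper only asserts; your boundary analysis of \eqref{derivative2} and the computation $f'(\bar\rho)=1$ agree with the paper; the envelope-theorem argument for the strict decrease of $f(\hat\rho)$ is a legitimate alternative to the paper's pointwise comparison $f_{a_1}>f_{a_2}$; and your handling of the limits $a\to(1-\theta^2)^-$, $\theta\to\sqrt{1-a}^{\,-}$ via $0<\hat\rho<\bar\rho\to0$ and $f(\hat\rho)\le f(\bar\rho)=\bar\rho$ is simpler than the paper's evaluation at $\rho=\varepsilon$. (One caveat in (v): for $a\ge 1/2$, Corollary~\ref{corollary1} gives the boundary minimizer $\hat\rho(0)=1-a$ with value $1-a$, so the closed forms you propose to ``read off'' only come out of Corollary~\ref{corollary1} when $a<1/2$.)

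The genuine gap is the strict monotonicity of $\hat\rho$ in (ii) and (iv). You correctly reduce it to the signs of $\partial_a f'(\hat\rho)$ and $\partial_\theta f'(\hat\rho)$, but the claim that these are positive ``using $g(\hat\rho)>\theta$'' does not hold up. Substituting $f'(\hat\rho)=0$ into $\partial_\theta f'(\rho)=\rho^{-2}\bigl[-2\theta+\bigl(2(1-a-\rho)+a\rho\bigr)/g(\rho)\bigr]$ gives
\begin{equation*}
\partial_\theta f'(\hat\rho)=\frac{1-a-\theta^2-2\hat\rho^2}{\theta\,\hat\rho^2},
\end{equation*}
so positivity is \emph{equivalent} to the quantitative bound $\hat\rho<\sqrt{(1-a-\theta^2)/2}$. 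This is strictly stronger than $\hat\rho<\bar\rho$ (i.e.\ than $g(\hat\rho)>\theta$), since $\bar\rho$ can exceed $\sqrt{(1-a-\theta^2)/2}$: for $\theta=0.1$, $a=0.01$ one has $\bar\rho\approx0.895$ while $\sqrt{(1-a-\theta^2)/2}=0.7$. The paper isolates precisely this missing step in Lemma~\ref{lemma2}, proving $\hat\rho<(\sqrt{1-a}-\theta)/\sqrt{2}\le\sqrt{(1-a-\theta^2)/2}$ via the sign of the polynomial $P$ at $(\sqrt{1-a}-\theta)/\sqrt{2}$; an equivalent patch is the identity $\bigl(2(1-a-\rho)+a\rho\bigr)^2=4(1-a)g(\rho)^2+a^2\rho^2$, which yields $\partial_\theta f'>0$ on all of $(0,1-a)$. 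The $a$-partial has the same defect: after the substitution, its positivity is not visible from $g(\hat\rho)>\theta$ alone (bounding $\theta/g(\hat\rho)$ by $1$ already fails numerically at $\theta=a=0.1$), and again needs an input of the type supplied by the paper's Lemma~\ref{lemma2}(i). Until such a bound is proved, your monotonicity of $\hat\rho$ in (ii) and (iv) is asserted rather than established.
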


\noindent \textbf{Proof of (i)}. Note that $f'$ is continuous on $(0,1-a)$, and it is easy to check that when $0<a<1-\theta^2<1$,  
$$ \underset{\rho\rightarrow 0^+}{\lim}f'(\rho)=-\infty, \quad \underset{\rho\rightarrow(1-a)^-}{\lim}f'(\rho)=\infty.\footnote{When $\theta=0$ or $a=0$, $\lim_{\rho\rightarrow(1-a)^-}f'(\rho)<\infty$.} $$
It follows that $f'$ has a root in $(0,1-a)$. Also, one can check that the second derivative of $f$, which we denote by $f''$, is strictly positive on $(0,1-a)$. This means $f'$ is strictly increasing on $(0,1-a)$, implying that $f'$ has at most one root in $(0,1-a)$. Hence, $f'$ has exactly one root in $(0,1-a)$. Since $f''>0$, this root is a minimizer. Then, left continuity of $f$ at $\rho=1-a$ implies that $f$ has a unique minimizer, denoted by $\hat\rho$, on $(0,1-a]$. It is easy to check that $f'(\bar\rho)=1$, which, being positive, implies that $0<\hat\rho<\bar\rho<1-a$. This completes the proof of (i).

\bigskip

Before turning to the proof of Proposition~\ref{prop5} parts (ii)-(v), we first state and prove two lemmas, which will be used in the subsequent proofs.

\begin{lemma} \label{lemma2} Let $f_{\theta,a}:(0,1-a]\to\mathbb{R}_+$ be the function defined in \eqref{f} and $f'_{\theta,a}$ denote its derivative. 
\begin{itemize} 
\item[(i)] Fix $\theta\in(0,1)$. If $0<a_1<a_2<1-\theta^2$, then $f'_{a_2}(\hat\rho(a_1))>0$.
\item[(ii)] Fix $a\in(0,1)$. If $0<\theta_1<\theta_2<\sqrt{1-a}$, then $f'_{\theta_2}(\hat\rho(\theta_1))>0$.
\end{itemize}
\end{lemma}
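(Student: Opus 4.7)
The plan is to use the explicit formula \eqref{derivative2} in the compact form
\[ \rho^2 f'_{\theta,a}(\rho) = 2\rho^2 - 1 + a - \theta^2 + \theta\, W(\rho,a), \qquad W(\rho,a) := \frac{2(1-\rho-a) + a\rho}{\sqrt{(1-\rho)(1-\rho-a)}}, \]
together with the first-order condition $f'_{\theta,a}(\hat\rho)=0$, to reduce each part to an algebraic inequality.

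For part (ii), set $\rho_0 := \hat\rho(\theta_1,a)$. Since only the terms $-\theta^2$ and $\theta W$ above depend on $\theta$, subtraction yields
\[ \rho_0^2 \bigl[f'_{\theta_2,a}(\rho_0) - f'_{\theta_1,a}(\rho_0)\bigr] = (\theta_2-\theta_1)\bigl[W(\rho_0,a) - \theta_1 - \theta_2\bigr]. \]
Using $f'_{\theta_1,a}(\rho_0)=0$ and $\theta_2>\theta_1$, positivity reduces to $W(\rho_0,a)>\theta_1+\theta_2$. The key step is the universal bound $W(\rho,a)\geq 2\sqrt{1-a}$, which I would prove by setting $A:=1-\rho$ and expanding the identity
\[ \bigl(2A - a(1+A)\bigr)^2 - 4(1-a)\,A(A-a) = a^2(1-A)^2 \geq 0. \]
Since $0<\theta_1<\theta_2<\sqrt{1-a}$ (and $a,\rho_0>0$ yield a strict bound), we obtain $\theta_1+\theta_2 < 2\sqrt{1-a} \leq W(\rho_0,a)$, completing part (ii).

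For part (i), I would establish the stronger supermodularity $\partial_a\partial_\rho f_{\theta,a}(\rho)>0$ throughout the admissible set. Then setting $\rho_0 := \hat\rho(\theta,a_1)$ and using $f'_{\theta,a_1}(\rho_0)=0$ gives $f'_{\theta,a_2}(\rho_0) = \int_{a_1}^{a_2}\partial_a\partial_\rho f_{\theta,a}(\rho_0)\,da > 0$. A direct calculation shows $\partial_a\partial_\rho f$ has the sign of
\[ X \,:=\, 2B(h-\theta) + \rho\theta(A+B), \qquad B := A-a,\ h := \sqrt{AB}, \]
which is manifestly positive when $h\geq\theta$ (i.e., $\rho\leq\bar\rho$). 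When $h<\theta$, rewrite $X = \theta[a(1+A)-2A^2] + 2Bh$: if $a(1+A)\geq 2A^2$ both summands are nonnegative, and otherwise positivity reduces (after squaring) to $4AB^3 > \theta^2(2A^2 - a(1+A))^2$. Using the identity from part (ii) in the form
\[ 4AB^3 - \theta^2\bigl(2A^2 - a(1+A)\bigr)^2 = a\rho^2\bigl[4AB - a(1-a)\bigr] + \bigl[(1-a) - \theta^2\bigr]\bigl(2A^2 - a(1+A)\bigr)^2, \]
this is immediate whenever $4AB\geq a(1-a)$, since $\theta^2<1-a$.

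The main obstacle is the sub-case where $h<\theta$, $a(1+A)<2A^2$, and $4AB<a(1-a)$ hold simultaneously. I would show this sub-case is empty: the second condition forces $A > (a+\sqrt{a^2+8a})/4$, while the third forces $A < (a+\sqrt{a})/2$; the inequality $\sqrt{a^2+8a} \geq a + 2\sqrt{a}$, which by squaring reduces to $a\leq 1$, shows the former strictly exceeds the latter for all $a\in(0,1)$, so the three conditions cannot hold together. This establishes $X>0$ throughout and completes part (i).
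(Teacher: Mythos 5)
Your proof is correct, but it takes a genuinely different route from the paper's. The paper introduces the auxiliary sextic polynomial $P_{\theta,a}$ of \eqref{P}--\eqref{P2} together with the sign relations $f'(\rho)=0\Rightarrow P(\rho)=0$ and $f'(\rho)>0\Leftrightarrow P(\rho)<0$, proves (ii) by monotonicity of $P$ in $\theta$ at the critical point, and proves (i) in stages: an a priori bound $\hat\rho<\sqrt{(1-\theta^2-a)/2}$, a comparison for the small increment $a_2=a_1+\varepsilon_0$, and an extension to general $a_2$ via strict monotonicity of $f'$. You instead work directly with \eqref{derivative2}: for (ii), the $\theta$-dependence of $\rho^2 f'_{\theta,a}(\rho)$ is quadratic, so the difference at $\rho_0=\hat\rho(\theta_1)$ factors as $(\theta_2-\theta_1)\bigl[W(\rho_0,a)-\theta_1-\theta_2\bigr]$, and your uniform bound $W\geq 2\sqrt{1-a}$ settles the claim; I checked the identity $\bigl(2A-a(1+A)\bigr)^2-4(1-a)A(A-a)=a^2(1-A)^2$, and since the numerator $2(1-\rho-a)+a\rho$ is positive on the domain the square-root step is legitimate, so your (ii) is shorter and more quantitative than the paper's. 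For (i) you prove the stronger supermodularity $\partial_a\partial_\rho f_{\theta,a}>0$: I verified that $\rho^2\,\partial_a f'_{\theta,a}(\rho)$ has the sign of $X=2B(h-\theta)+\rho\theta(A+B)$, that your second identity $4AB^3-\theta^2\bigl(2A^2-a(1+A)\bigr)^2=a\rho^2\bigl[4AB-a(1-a)\bigr]+\bigl[(1-a)-\theta^2\bigr]\bigl(2A^2-a(1+A)\bigr)^2$ holds by direct expansion, and that your exceptional sub-case is indeed empty, so integrating in $a$ gives the conclusion for every admissible $a_2$ in one stroke, avoiding the paper's increment-plus-extension step. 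What each approach buys: the paper reduces everything to sign checks of a single polynomial, while yours produces stronger intermediate facts (the explicit bound $W\geq 2\sqrt{1-a}$ and monotonicity of $f'_{\theta,a}(\rho)$ in each parameter). One shared caveat, not specific to you: your integration over $[a_1,a_2]$ requires $\hat\rho(a_1)<1-a_2$, i.e.\ that $\hat\rho(a_1)$ lies in the domain of $f_{a_2}$; but this is exactly what is needed for the statement $f'_{a_2}(\hat\rho(a_1))>0$ to be well posed (the paper's own extension step needs it too), and in the only application, Proposition~\ref{prop5}(ii), the complementary case is trivial since then $\hat\rho(a_2)<\bar\rho(a_2)<1-a_2\leq\hat\rho(a_1)$.
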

\begin{proof}
Let $P=P_{\theta,a}$ be the sixth degree polynomial defined by the rule
\begin{align}
P(\rho)=&\left[4\rho^4-4(1+\theta^2-a)\rho^2+(1-\theta^2-a)^2\right]\left[\rho^2-(2-a)\rho+(1-a)\right]-(\theta a)^2\rho^2 \label{P} \\
=&\left[(1-\theta^2-a-2\rho^2)^2-8\theta^2\rho^2\right]\left[\rho^2-(2-a)\rho+(1-a)\right]-(\theta a)^2\rho^2. \label{P2}
\end{align} 
From \eqref{derivative2}, it can be shown that for $\rho\in(0,1-a]$ we have the following implications: 
\begin{align}
(a)\quad f'(\rho)=0 &\:\Rightarrow\: P(\rho)=0, \nonumber \\
(b)\quad f'(\rho)>0 &\:\Leftrightarrow\: P(\rho)<0. \nonumber
\end{align}
To prove (i), we first show that for any pair $(\theta,a)$ with $0<a<1-\theta^2<1$, we have $\hat\rho(\theta,a)<\sqrt{(1-\theta^2-a)/2}$. It is easy to check via \eqref{P} that $P((\sqrt{1-a}-\theta)/\sqrt{2})<0$, which implies by (b) that $f'((\sqrt{1-a}-\theta)/\sqrt{2})>0$. Since $f'(\hat\rho)=0$ and $f'$ is strictly increasing on $(0,1-a)$, this implies $\hat\rho<(\sqrt{1-a}-\theta)/\sqrt{2}$, which in turn implies $\hat\rho<\sqrt{(1-\theta^2-a)/2}$. 

Now fix $\theta$. Choose $\varepsilon_0>0$ small enough such that $a_1+\varepsilon_0<1-\theta^2$ and $1-\theta^2-(a_1+\varepsilon_0)-2(\hat\rho(a_1))^2>0$. (We have already shown above that $1-\theta^2-a_1-2(\hat\rho(a_1))^2>0$.) Set $a_2=a_1+\varepsilon_0$ and start with $f'_{a_1}(\hat\rho(a_1))=0$. Then, (a) implies that $P_{a_1}(\hat\rho(a_1))=0$. Since $1-\theta^2-a_2-2(\hat\rho(a_1))^2>0$ by the choice of $a_2$, \eqref{P2} implies that $P_{a_2}(\hat\rho(a_1))<0$, which by (b) implies that $f'_{a_2}(\hat\rho(a_1))>0$ for $a_2=a_1+\varepsilon_0$. Now since $f'$ is strictly increasing on $(0,1-a)$, we conclude that $f'_{a_2}(\hat\rho(a_1))>0$ for any $a_2$ with $a_1<a_2<1-\theta^2$.       

To prove (ii), fix $a$ and start with $f'_{\theta_1}(\hat\rho(\theta_1))=0$. By (a), this implies that $P_{\theta_1}(\hat\rho(\theta_1))=0$. By \eqref{P}, for $\theta_2>\theta_1$ it is clear that $P_{\theta_2}(\hat\rho(\theta_1))<0$, which by (b) implies that $f'_{\theta_2}(\hat\rho(\theta_1))>0$.  
\end{proof}

The following lemma says that when viewed as a function of $\theta$ and $a$, $\hat\rho$ is separately continuous.

\begin{lemma} \label{lemma4} Let $f_{\theta,a}:(0,1-a]\to\mathbb{R}_+$ be the function defined in \eqref{f} and $\hat\rho(\theta,a)$ denote its unique minimizer. 
\begin{itemize} 
\item[(i)] Fix $\theta\in(0,1)$ and let $(a_n)_{n\geq 1}$ be a sequence with all terms in $[0,1-\theta^2)$.\newline
If $a_n\to a_0$ and $a_0\in[0,1-\theta^2)$, then $\hat\rho(a_n)\to \hat\rho(a_0)$ as $n\to\infty$.
\item[(ii)] Fix $a\in(0,1)$ and let $(\theta_n)_{n\geq 1}$ be a sequence with all terms in $[0,\sqrt{1-a})$. \newline
If $\theta_n\to \theta_0$ and $\theta_0\in[0,\sqrt{1-a})$, then $\hat\rho(\theta_n)\to \hat\rho(\theta_0)$ as $n\to\infty$.
\end{itemize}
\end{lemma}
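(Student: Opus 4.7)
The plan is to use a standard subsequence continuity argument, relying on Proposition~\ref{prop5}(i) (existence and uniqueness of the minimizer, together with the bound $\hat\rho<\bar\rho$) and on the joint continuity of $f_{\theta,a}(\rho)$ in its arguments. I will outline part (i); part (ii) is identical modulo the substitution $\theta\leftrightarrow a$.

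Fix $\theta\in(0,1)$ and let $a_n\to a_0\in[0,1-\theta^2)$. Since $\hat\rho(a_n)\in(0,\bar\rho(a_n))$ by Proposition~\ref{prop5}(i), and $\bar\rho(\cdot)$ is continuous in $a$ with $\bar\rho(a_0)<1-a_0$, the sequence $(\hat\rho(a_n))$ is bounded. To show $\hat\rho(a_n)\to\hat\rho(a_0)$, it suffices by Bolzano--Weierstrass to show that every convergent subsequence has limit $\hat\rho(a_0)$. So let $\hat\rho(a_{n_k})\to r^*\in[0,\bar\rho(a_0)]$; the two steps are to rule out $r^*=0$, and then to identify $r^*$ with $\hat\rho(a_0)$.

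For the first step, I would use that $\rho=\bar\rho$ was chosen precisely to kill the second term of $f$, so $f_{\theta,a}(\bar\rho)=\bar\rho$. Combined with the minimizer property this gives the uniform bound
\begin{equation}
f_{\theta,a_{n_k}}(\hat\rho(a_{n_k}))\leq f_{\theta,a_{n_k}}(\bar\rho(a_{n_k}))=\bar\rho(a_{n_k}),
\end{equation}
which stays bounded as $k\to\infty$. On the other hand, if $\hat\rho(a_{n_k})\to0$, then the second term of $f_{\theta,a_{n_k}}(\hat\rho(a_{n_k}))$, namely $(\sqrt{(1-\hat\rho)^2-a_{n_k}(1-\hat\rho)}-\theta)^2/\hat\rho$, would blow up: its numerator tends to $(\sqrt{1-a_0}-\theta)^2>0$ (the strict positivity uses $a_0<1-\theta^2$, equivalently $\sqrt{1-a_0}>\theta$), while the denominator tends to $0$. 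This contradicts the bound above, so $r^*\in(0,\bar\rho(a_0)]$.

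For the second step, fix any $\rho\in(0,1-a_0)$. Eventually $\rho<1-a_{n_k}$, so $\rho$ is a competitor in the minimization defining $\hat\rho(a_{n_k})$, giving $f_{\theta,a_{n_k}}(\hat\rho(a_{n_k}))\leq f_{\theta,a_{n_k}}(\rho)$. The joint continuity of $f_{\theta,a}(\rho)$ in $(a,\rho)$ on the relevant open set lets us pass to the limit on both sides, yielding $f_{\theta,a_0}(r^*)\leq f_{\theta,a_0}(\rho)$. Since $\rho\in(0,1-a_0)$ is arbitrary and $r^*\in(0,1-a_0)$, $r^*$ is a minimizer of $f_{\theta,a_0}$ on $(0,1-a_0]$, so the uniqueness clause of Proposition~\ref{prop5}(i) forces $r^*=\hat\rho(a_0)$. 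The main obstacle is the boundary analysis ruling out $r^*=0$, but the strict inequalities $\sqrt{1-a_0}>\theta$ (for (i)) and $\sqrt{1-a}>\theta_0$ (for (ii)) both follow from the assumption that the limit lies in the open domain, making the blow-up argument clean.
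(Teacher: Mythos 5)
Your proposal is correct and follows essentially the same route as the paper's proof: extract a convergent subsequence of $\hat\rho(a_{n_k})$, compare with a fixed competitor, pass to the limit using the joint continuity of $f_{\theta,a}(\rho)$ in $(a,\rho)$, and invoke uniqueness of the minimizer from Proposition~\ref{prop5}(i). Your explicit blow-up argument ruling out a subsequential limit at $\rho=0$ (using $\sqrt{1-a_0}>\theta$ together with $f_{\theta,a}(\bar\rho)=\bar\rho$) is a careful addition that the paper's proof leaves implicit, but it does not change the method.
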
 
\begin{proof} We prove (i). The proof of (ii) is similar. 
Fix $\theta$ and let $a_n\to a_0$ with $a_0\in[0,1-\theta^2)$. Since $\{\hat\rho(a_n)\}$ is a bounded sequence (all terms are in $[0,1]$), it has a convergent subsequence, say $\{\hat\rho(a_{n_k})\}$ with limit $\rho_0$. Let $\rho^*$ be the minimizer of $f_{a_0}$. Then, $f_{a_{n_k}}(\rho^*)\to f_{a_0}(\rho^*)$ since $a_{n_k}\to a_0$ and $f_a$ is continuous in $a$. Since $f_{a_{n_k}}(\rho^*)\geq f_{a_{n_k}}(\hat\rho(a_{n_k}))$, $a_{n_k}\to a_0$ and $\hat\rho(a_{n_k})\to\rho_0$, continuity of $f$ in both $a$ and $\rho$ implies that $f_{a_0}(\rho^*)\geq f_{a_0}(\rho_0)$. By uniqueness of minimizers, we conclude that $\rho_0=\rho^*$. This implies that $\hat\rho(a_n)\to \rho^*=\hat\rho(a_0)$.   
\end{proof}

\bigskip

\noindent \textbf{Proof of (ii)}. For fixed $\theta$, let $a_1$ and $a_2$ be in $[0,1-\theta^2)$ with $a_1<a_2$. Observe that \eqref{f} implies $f_{a_1}(\rho)>f_{a_2}(\rho)$ for each $\rho\in(0,1-a_2]$. Then,
$$ f_{a_1}(\hat\rho(a_1))>f_{a_2}(\hat\rho(a_1))\geq f_{a_2}(\hat\rho(a_2)) \quad \text{when} \quad \hat\rho(a_1)\in(0,1-a_2],$$
and
$$ f_{a_1}(\hat\rho(a_1))\geq \hat\rho(a_1)>1-a_2\geq f_{a_2}(\hat\rho(a_2)) \quad \text{when} \quad \hat\rho(a_1)\in(1-a_2,1-a_1],$$
where the last inequality follows since $f_{a_2}(\bar\rho(a_2))=\bar\rho(a_2)<1-a_2$ and $\hat\rho(a_2)$ is the minimizer for $f_{a_2}$. This implies that $f_{a}(\hat\rho(a))$ is strictly decreasing in $a$. 

To show that $\hat\rho(a)$ is strictly decreasing in $a\in[0,1-\theta^2)$, let $a_1<a_2$, and use that $f'_{a_2}(\hat\rho(a_1))>0$ (Lemma~\ref{lemma2}(i)), which implies that $\hat\rho(a_2)<\hat\rho(a_1)$ since $f'$ is strictly increasing. This completes the proof of (ii). 

\bigskip

\noindent \textbf{Proof of (iii)}. It follows directly from Lemma~\ref{lemma4}(i) that $\underset{a\rightarrow 0^+}{\lim}\hat{\rho}(a)=\hat\rho(0)$. Also, if $a_n\to a$ then $f_{a_n}(\hat\rho(a_n))\to f_a(\hat\rho(a))$ since $a_n\to a$, $\hat\rho(a_n)\to\hat\rho(a)$ by Lemma~\ref{lemma4}(i), and $f$ is continuous in both $a$ and $\rho$. It follows that $\underset{a\rightarrow 0^+}{\lim}f_a(\hat\rho(a))=f_0(\hat\rho(0))$. 

We now prove \eqref{eq28}. Fix $\theta$ and let $a=1-\theta^2-\varepsilon$, where $\varepsilon>0$ is small. Write the second term on the right-hand side of \eqref{f} as 
\begin{align}
\frac{\left[\sqrt{(1-\rho)^2-a(1-\rho)}-\theta\right]^2}{\rho}=&\:\frac{\left[(1-\rho)^2-a(1-\rho)-\theta^2\right]^2}{\rho\left[\sqrt{(1-\rho)^2-a(1-\rho)}+\theta\right]^2} \nonumber \\ 
=&\:\frac{\rho(\rho-1-\theta^2+\varepsilon(1-\rho)/\rho)^2}{\left[\sqrt{(1-\rho)^2-(1-\theta^2-\varepsilon)(1-\rho)}+\theta\right]^2} \nonumber \\ 
\leq&\:\varepsilon\: \theta^4/\theta^2\leq \varepsilon,
\end{align}
where we have used $a=1-\theta^2-\varepsilon$ in passing to the second line, and we set $\rho=\varepsilon$ in passing to the last line. This shows that for $a=1-\theta^2-\varepsilon$, $\underset{\rho\in(0,1-a]}{\inf}f_a(\rho)\leq f_a(\varepsilon)\leq 2\varepsilon$, which proves that $\underset{a\rightarrow (1-\theta^2)^-}{\lim}f_a(\hat\rho(a))=0$. Note that for sufficiently small $\varepsilon$, we may take $\rho=\varepsilon$ since $\theta>0$, and we have $1-a=1-(1-\theta^2-\varepsilon)=\theta^2-\varepsilon>\varepsilon$ so that $\rho=\varepsilon$ falls inside $(0,1-a]$. Since both terms in the formula for $f(\rho)$ are positive (see \eqref{f}), this in turn implies that $\underset{a\rightarrow (1-\theta^2)^-}{\lim}\hat{\rho}(a)=0$. This completes the proof of (iii). 

\bigskip

\noindent \textbf{Proof of (iv)}. For fixed $a$, let $\theta_1$ and $\theta_2$ be in $[0,\sqrt{1-a})$ with $\theta_1<\theta_2$. Observe that \eqref{f} implies $f_{\theta_1}(\rho)>f_{\theta_2}(\rho)$ for each $\rho\in(0,1-a]$. 
Then,
$$ f_{\theta_1}(\hat\rho(\theta_1))>f_{\theta_2}(\hat\rho(\theta_1))\geq f_{\theta_2}(\hat\rho(\theta_2)),$$
which implies that $f_{\theta}(\hat\rho(\theta))$ is strictly decreasing in $\theta$. To show that $\hat\rho(\theta)$ is strictly decreasing in $\theta\in[0,\sqrt{1-a})$, let $\theta_1<\theta_2$, and use that $f'_{\theta_2}(\hat\rho(\theta_1))>0$ (Lemma~\ref{lemma2}(ii)), which gives $\hat\rho(\theta_2)<\hat\rho(\theta_1)$ since $f'$ is strictly increasing. This completes the proof of (iv).

\bigskip

\noindent\textbf{Proof of (v)}. It follows directly from Lemma~\ref{lemma4}(ii) that $\underset{\theta\rightarrow 0^+}{\lim}\hat{\rho}(\theta)=\hat\rho(0)$. Also, if $\theta_n\to \theta$ then $f_{\theta_n}(\hat\rho(\theta_n))\to f_\theta(\hat\rho(\theta))$ since $\theta_n\to \theta$, $\hat\rho(\theta_n)\to\hat\rho(\theta)$ by Lemma~\ref{lemma4}(ii), and $f$ is continuous in both $\theta$ and $\rho$. It follows that $\underset{\theta\rightarrow 0^+}{\lim}f_\theta(\hat\rho(\theta))=f_0(\hat\rho(0))$. 

We now prove \eqref{eq28}. Fix $a$ and let $\theta=\sqrt{1-a-\varepsilon}$, where $\varepsilon>0$ is small. Write the second term on the right-hand side of \eqref{f} as 
\begin{align}
\frac{\left[\sqrt{(1-\rho)^2-a(1-\rho)}-\theta\right]^2}{\rho}=&\:\frac{\left[(1-\rho)^2-a(1-\rho)-\theta^2\right]^2}{\rho\left[\sqrt{(1-\rho)^2-a(1-\rho)}+\theta\right]^2} \nonumber \\ 
=&\:\frac{\rho(\rho-2+a+\varepsilon/\rho)^2}{\left[\sqrt{(1-\rho)^2-a(1-\rho)}+\sqrt{1-a-\varepsilon}\right]^2} \nonumber \\ 
\leq&\: \frac{\varepsilon(\varepsilon+a-1)^2}{1-a-\varepsilon}\leq \varepsilon(\varepsilon+a-1)\leq \varepsilon,
\end{align}
where we have used $\theta=\sqrt{1-a-\varepsilon}$ in passing to the second line, and we set $\rho=\varepsilon$ for $\varepsilon<1-a$ in passing to the last line. This shows that for $\theta=\sqrt{1-a-\varepsilon}$, $\underset{\rho\in(0,1-a]}{\inf}f_\theta(\rho)\leq f_\theta(\varepsilon)\leq 2\varepsilon$, which proves that $\underset{\theta\rightarrow\sqrt{1-a}^-}{\lim}f_\theta(\hat\rho(\theta))=0$. Since both terms in the formula for $f(\rho)$ are positive,  this in turn implies that $\underset{\theta\rightarrow\sqrt{1-a}^-}{\lim}\hat{\rho}(\theta)=0$, completing the proof of (v). \qed

\section{Proof of Theorem~\ref{theorem2}} \label{section6}
In this section we prove Theorem~\ref{theorem2}. The method of proof is similar to that of Theorem~\ref{theorem1}. 

\subsection{Proof of the lower bound}
The lower bound arises from the following strategy which gives a mass of less than $\exp(\beta a t)$ to $\widehat{B}_t^c$ at time $t$, where $\widehat{B}_t:=B(0,x_t)$ with $x_t=\theta\sqrt{2\beta}t$ and $0< \theta<1$.

For $0<\theta<1$ and $0\leq a<1-\theta^2$, as before, let 
$$ \bar{\rho}:=1-a/2-\sqrt{(a/2)^2+\theta^2},$$
and for $0<\delta_1<1-\bar\rho$, split the time interval $[0,t]$ into two pieces at $(\bar{\rho}+\delta_1)t$. In $[0,(\bar{\rho}+\delta_1)t]$, suppress the branching completely and for $0<\delta_2<\theta$, confine the single particle to $B(0,\delta_2\sqrt{2\beta}t)$. Suppressing the branching costs a probability of $\exp[-\beta (\bar{\rho}+\delta_1)t]$, whereas confining the particle to $B(0,\delta_2\sqrt{2\beta}t)$ comes for `free', i.e., costs $\exp[o(t)]$, since a Brownian particle typically moves a distance of order $\sqrt{t}$ over linear time. 

Next, in the remaining interval $[(\bar{\rho}+\delta_1)t,t]$, let the BBM behave `normally,' and send typically many particles to $\widehat{B}_t^c$ at time $t$. To be precise, denote the position of the single particle at time $(\bar{\rho}+\delta_1)t$ by $X_t$. Then, uniformly over $X_t\in B(0,\delta_2\sqrt{2\beta}t)$, \eqref{biggins} implies that for any $\varepsilon>0$, for all large $t$,
\begin{equation} P_{X_t}\left(Z_{(1-(\bar{\rho}+\delta_1))t}(\widehat{B}_t^c)\leq \exp\left[\left(1-(\bar{\rho}+\delta_1)-\frac{(\theta-\delta_2)^2}{1-(\bar{\rho}+\delta_1)}\right)\beta t+\varepsilon\beta t\right]\right)>1-\varepsilon. 
\end{equation}
Choose $\varepsilon$ and $\delta_2$ small enough to satisfy
\begin{equation}
\varepsilon+\left[1-(\bar{\rho}+\delta_1)-\frac{(\theta-\delta_2)^2}{1-(\bar{\rho}+\delta_1)}\right]< a. \label{alev}
\end{equation}
Indeed, it is possible to choose $\varepsilon$ and $\delta_2$ satisfying \eqref{alev}. Using the definition of $\bar\rho$, it is not hard to show that the left-hand side of \eqref{alev} can be bounded from above by $\varepsilon+a-\delta_1+2\delta_2/\theta$, which is less than $a$ if we choose $\delta_2=\theta\delta_1/4$ and $\varepsilon=\delta_1/4$.

Applying the Markov property at $(\bar\rho+\delta_1)t$, and combining the costs of the `partial' strategies over $[0,(\bar\rho+\delta_1)t]$ and $[(\bar\rho+\delta_1)t,t]$, we obtain
$$\underset{t\rightarrow\infty}{\liminf}\:\frac{1}{t}\log P\left(Z_t(\widehat{B}_t^c)< e^{\beta at}\right)\geq -\beta (\bar{\rho}+\delta_1)=-\beta[1-a/2-\sqrt{\theta^2+(a/2)^2}+\delta_1] .$$
Let $\delta_1\rightarrow 0$ to obtain the lower bound.

\subsection{Proof of the upper bound}
Let $0<\delta<\bar{\rho}$. Estimate
\begin{equation} P\left(Z_t(\widehat{B}_t^c)< e^{\beta a t}\right)\leq 
P\left(N_{(\bar{\rho}-\delta)t}\leq \lfloor t \rfloor\right)+P\left(Z_t(\widehat{B}_t^c)< e^{\beta a t}\mid N_{(\bar{\rho}-\delta)t}>\lfloor t \rfloor\right). \label{eq30}
\end{equation}
We will show that the second term on the right-hand side of \eqref{eq30} is SES in $t$. Consider a particle that is present at time $(\bar{\rho}-\delta)t$. Regardless of its position at that time, it is at most at a distance of $x_t$ away from the boundary of $\widehat{B}_t$. Recall that $x_t=\theta\sqrt{2\beta}t$, which is small enough for the sub-BBM emanating from the particle at time $(\bar{\rho}-\delta)t$ to contribute at least $\exp(\beta a t)$ particles to $\widehat{B}_t^c$ at time $t$. More precisely, the remaining time is $(1-\bar\rho+\delta)t$, and since
\begin{equation} a<(1-\bar\rho+\delta)-\frac{\theta^2}{1-\bar\rho+\delta}, \label{eq300}
\end{equation}
Lemma~\ref{lemma3} implies that each particle present at time $(\bar{\rho}-\delta)t$ initiates a sub-BBM which, with overwhelming probability, contributes at least $e^{\beta a t}$ particles to $\widehat{B}_t^c$ at time $t$, that is, the probability of the complement event is at most $e^{-ct}$ for some $c$ for all large $t$. (It is easy to see \eqref{eq300} since $a=(1-\bar\rho)-\frac{\theta^2}{1-\bar\rho}$ and $\delta>0$.) Conditional on the event $\{ N_{(\bar{\rho}-\delta)t}>\lfloor t \rfloor \}$, there are at least $\lfloor t \rfloor$ particles at time $(\bar{\rho}-\delta)t$. Therefore, by independence of sub-BBMs emanating from these particles at that time, for all large $t$,
\begin{equation}
P\left(Z_t(\widehat{B}_t^c)< e^{\beta a t}\mid N_{(\bar{\rho}-\delta)t}>\lfloor t \rfloor\right)\leq (e^{-ct})^{\lfloor t \rfloor}, \label{eq31}
\end{equation}
which is SES in $t$. Next, we use \eqref{prop1} to bound the first term on the right-hand side of \eqref{eq30} from above as
\begin{equation} P\left(N_{(\bar{\rho}-\delta)t}\leq \lfloor t \rfloor\right)\leq e^{-\beta(\bar{\rho}-\delta)t+o(t)}.  \label{eq32}
\end{equation}
Using \eqref{eq30}, \eqref{eq31}, and \eqref{eq32}, we obtain
$$\underset{t\rightarrow\infty}{\limsup}\:\frac{1}{t}\log P\left(Z_t(\widehat{B}_t^c)< e^{\beta a t}\right)\leq -\beta(\bar{\rho}-\delta)=-\beta[1-a/2-\sqrt{\theta^2+(a/2)^2}-\delta].$$
Let $\delta\rightarrow 0$ to complete the proof.

\bibliographystyle{plain}

\end{document}